\documentclass[11pt,leqno]{amsart}
\usepackage[letterpaper, total={6in, 8in}, left=1.25in, top=1.5in]{geometry}
\usepackage{amsmath}
\usepackage{amsfonts}
\usepackage{amssymb}
\usepackage{graphicx}
\usepackage{color}
\usepackage{xcolor}
\usepackage{hyperref}
\parskip 4pt

\newtheorem{theorem}{Theorem}[section]
\newtheorem*{theorem*}{Theorem}
\newtheorem{lemma}{Lemma}[section]
\newtheorem{corollary}[theorem]{Corollary}
\newtheorem{proposition}{Proposition}[section]

\newtheorem{definition}[theorem]{Definition}



\def\Ric{\text{Ric}}

\def\p{\partial}

\def\R{\mathbb{R}}

\def\vp{\varphi}

\def\k{\kappa}

\def\H{\mathbb{H}}

\def\Ric{\operatorname{Ric}}

\def\Sect{\operatorname{Sect}}

\def\diam{\operatorname{diam}}
\def\dist{\operatorname{dist}}

\newcommand{\eps}{{\varepsilon}}

\numberwithin{equation}{section}

\begin{document}

\title[An upper bound for the first nonzero Steklov eigenvalue]{An upper bound for the first nonzero Steklov eigenvalue}

\author{Xiaolong Li}
\address{Department of Mathematics, University of California, Irvine, CA 92697, USA}
\email{xiaolol1@uci.edu}

\author{Kui Wang}
\address{School of Mathematical Sciences, Soochow University, Suzhou, 215006, China}
\email{kuiwang@suda.edu.cn}

\author{Haotian Wu}
\address{School of Mathematics and Statistics, The University of Sydney, NSW 2006, Australia}
\email{haotian.wu@sydney.edu.au}

\date{\today}
\subjclass[2010]{35P15, 49R05, 58C40, 58J50}
\keywords{Steklov Eigenvalue, Brock-Weinstock inequality, Spherical Symmetrization}

\begin{abstract}  
Let $(M^n,g)$ be a complete simply connected $n$-dimensional Riemannian manifold with curvature bounds $\Sect_g\leq \kappa$ for $\kappa\leq 0$ and $\Ric_g\geq(n-1)Kg$ for $K\leq 0$. We prove that for any bounded domain $\Omega \subset M^n$ with diameter $d$ and Lipschitz boundary, if $\Omega^*$ is a geodesic ball in the simply connected space form with constant sectional curvature $\kappa$ enclosing the same volume as $\Omega$, then $\sigma_1(\Omega) \leq C \sigma_1(\Omega^*)$, where $\sigma_1(\Omega)$ and $ \sigma_1(\Omega^*)$ denote the first nonzero Steklov eigenvalues of $\Omega$ and $\Omega^*$ respectively, and $C=C(n,\kappa, K, d)$ is an explicit constant. When $\kappa=K$, we have $C=1$ and recover the Brock-Weinstock inequality, asserting that geodesic balls uniquely maximize the first nonzero Steklov eigenvalue among domains of the same volume, in Euclidean space and the hyperbolic space. 
\end{abstract}

\maketitle


\section{Introduction}

Let $(M^n,g)$ be a complete Riemannian manifold of dimension $n$ and $\Omega \subset M^n$ be a bounded domain with Lipschitz boundary. The Steklov eigenvalue problem is to find a solution $u$ of the boundary value problem 
\begin{align*}
\begin{cases} 
\Delta u =0 & \text{ in } \Omega,\\
\frac{\p u}{\p \nu} =\sigma u & \text{ on } \p \Omega,
\end{cases}
\end{align*}
where $\Delta$ denotes the Laplace-Beltrami operator, $\nu$ denotes the outward unit normal to $\p \Omega$, and $\sigma$ is a real number. This problem was first introduced by Steklov \cite{Steklov02} in 1902 for bounded domains in the plane.
The set of eigenvalues for the Steklov problem is the same as that for the well-known Dirichlet-to-Neumann map, which maps $f\in L^2(\p \Omega)$ to the normal derivative on the boundary of the harmonic extension of $f$ inside $\Omega$. 
Since the Dirichlet-to-Neumann map is a self-adjoint operator, it has a discrete spectrum given by
\begin{align*}
0=\sigma_0(\Omega)<\sigma_1(\Omega ) \le \sigma_2(\Omega) \le \cdots \rightarrow \infty.
\end{align*}
The eigenfunctions of $\sigma_0(\Omega)$ are the constant functions. The first nonzero eigenvalue $\sigma_1(\Omega)$ is characterized by the following Rayleigh quotient
\begin{align}\label{min-max}
\sigma_1(\Omega)=\inf\left\{ \frac{\int_\Omega |\nabla u|^2 \ d\mu_g}{\int_{\partial \Omega}  u^2 \ dA_g} : u\in W^{1,2}(\Omega)\setminus\{0\},\;\; \int_{\partial \Omega} u\ dA_g=0 \right\},
\end{align}
where $d\mu_g$ is the volume form of $g$ and $dA_g$ is the induced measure on $\p \Omega$.

In 1954, Weinstock \cite{Weinstock54} showed that the round disk uniquely maximizes $\sigma_1(\Omega)$ among simply connected planar domains with prescribed perimeter. This result was generalized to arbitrary compact Riemannian surfaces by Fraser and Schoen \cite{FS11} to obtain the upper bound $\sigma_1(\Omega) |\p \Omega| \leq 2 \pi (\gamma +k)$ for a surface of genus $\gamma$ with $k$ boundary components. In higher dimensions, Bucur, Ferone, Nitsch and Trombetti \cite{BFNT17} proved that the ball uniquely maximizes $\sigma_1(\Omega)$ among bounded open convex sets in $\R^n$ with prescribed perimeter. The convexity assumption in the previous result is crucial. Indeed, for an annulus $B_1(0) \setminus B_\eps(0)$ with $\eps$ sufficiently small, its first nonzero Steklov eigenvalue is strictly bigger than that of a ball with same volume, see \cite{GP17}. Also, Fraser and Schoen \cite{FS19} have shown that the ball does not maximize $\sigma_1(\Omega)$ among contractible domains in $\R^n$ with prescribed perimeter. Moreover, they have given an explicit upper bound on $\sigma_1(\Omega)$ for any smooth domain in $\R^n$ in terms of its boundary perimeter (cf. \cite[Section 2]{FS19}).

When combined with the isoperimetric inequality, Weinstock's theorem implies that the round disk uniquely maximizes $\sigma_1(\Omega)$ among all simply connected planar domains with fixed area. In 2001, Brock \cite{Brock01} generalized Weinstock's result by removing any topological or dimensional restriction. As a result, we have the Brock-Weinstock inequality, which asserts that among domains in $\mathbb{R}^n$ with the same volume, the ball maximizes $\sigma_1(\Omega)$, and the equality occurs if and only if $\Omega$ is a ball. A sharp quantitative version of the Brock-Weinstock inequality has been proved by Brasco, De Philippis and Ruffini \cite{BDPR12}.

The Brock-Weinstock inequality is related to two classic spectral inequalities: the Faber-Krahn inequality, which asserts that the ball uniquely minimizes the first Dirichlet eigenvalue among domains with the same volume, and the Szeg\"o-Weinberger inequality stating that among domains with the same volume, the ball uniquely maximizes the first nonzero Neumann eigenvalue. It is well-known that the Faber–Krahn inequality holds in any Riemannian manifold in which the isoperimetric inequality holds, see \cite{Chavel84}. Also, the Sz\"ego-Weinberger inequality holds for domains in the hemisphere and in the hyperbolic space \cite{AB95}. Therefore, it is a natural question to extend the Brock-Weinstock inequality to space forms and more general Riemannian manifolds. 

Concerning the previous question, only a few results are known. In 1999, Escobar \cite{Escobar99} generalized Weinstock's theorem by proving that in a complete simply connected two-dimensional manifold with constant Gaussian curvature, geodesic balls maximize $\sigma_1(\Omega)$ among bounded simply connected domains with fixed area. In the same paper, the author obtained the more general eigenvalue comparison result: $\sigma_1(\Omega)$ of any bounded simply connected domain in a complete simply connected non-positively curved two-manifold is no larger than that of a ball in $\R^2$ with the same area, and the equality holds only when the domain is isometric to the round disk. In 2014, Binoy and Santhanam \cite{BS14} proved that in non-compact rank one symmetric spaces (including Euclidean space and hyperbolic space), geodesic balls maximize $\sigma_1(\Omega)$ among bounded domains of the same volume. Recently, a stability result for the theorem of Binoy and Santhanam has been proved by Castillon and Ruffini \cite{CR16}. 

The main purpose of this paper is to give an upper bound for
the first nonzero Steklov eigenvalue of a bounded domain in a 
simply connected Riemannian manifold $(M^n,g)$ with non-positive sectional curvatures. Throughout the paper, the function $sn_\kappa$ is defined by
\begin{align}\label{sine}
sn_\kappa(t) := 
\begin{cases}
\frac{1}{\sqrt{\kappa}} \sin(\sqrt{\kappa} t), & \text{ if } \kappa >0,\\
t, & \text{ if } \kappa=0,\\
\frac{1}{\sqrt{-\kappa}}\sinh{(\sqrt{-\kappa}t)}, & \text{ if } \kappa<0.
\end{cases}
\end{align}
We denote by $\Sect_g$ and $\Ric_g$ the sectional curvature and the Ricci curvature of $g$ respectively, and by $\diam(\Omega)$ the diameter of $\Omega\subset M^n$.

The main theorem of this paper states the following.
\begin{theorem}\label{th1}
	Let $(M^n, g)$ be a complete simply connected Riemannian manifold of dimension $n$, and $\Omega\subset M^n$ be a bounded domain with Lipschitz boundary. Let $M_\kappa$ be the $n$-dimensional simply connected space form of constant sectional curvature $\kappa$, and $\Omega^*$ be a geodesic ball in $M_{\kappa}$ having the same volume as $\Omega$. If $\Sect_g\leq\kappa$ for $\kappa\le 0$, and $\Ric_g\geq (n-1)Kg$ for $K\leq 0$, then
	\begin{align}\label{main-inequality}
	\sigma_1(\Omega) \le \left(\frac{sn_K (d)}{sn_\kappa (d)}\right)^{2n-2}\sigma_1(\Omega^*),
	\end{align}
	where $d=\operatorname{diam}(\Omega)$.
\end{theorem}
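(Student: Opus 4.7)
The plan is to mimic Brock's proof of the Weinstock inequality in Euclidean space, replacing the linear coordinates by the first radial Steklov eigenfunction of $\Omega^*$ transplanted to $M$ through the exponential map. Write the first Steklov eigenfunctions of $\Omega^*=B_{R^*}(o)\subset M_\kappa$ in polar coordinates at $o$ as $v_i(y)=f(r)\omega_i$, $i=1,\dots,n$, where $r=d_{M_\kappa}(o,y)$, the $\omega_i$ are Cartesian coordinates on the unit tangent sphere at $o$, and $f\colon[0,R^*]\to\R$ is the monotone radial profile (extended monotonically beyond $R^*$ if needed). For each $p\in M$ set
\begin{equation*}
u_i^p(x)=f(d(p,x))\,\omega_i^p(x),\qquad \omega_i^p(x)=\bigl\langle \exp_p^{-1}(x)/d(p,x),\,e_i\bigr\rangle,
\end{equation*}
for a fixed orthonormal frame $\{e_i\}$ of $T_pM$. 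The Cartan--Hadamard theorem (applicable because $M$ is simply connected with $\Sect_g\le\kappa\le 0$) guarantees that $\exp_p$ is a global diffeomorphism, so the $u_i^p$ are smooth on $M\setminus\{p\}$.

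\textbf{Hersch centering, Rayleigh quotient, and Rauch bound.} A Brouwer-degree argument in the spirit of Hersch produces a point $p_0\in M$ such that $\int_{\partial\Omega}u_i^{p_0}\,dA_g=0$ for every $i$, making $u_1^{p_0},\dots,u_n^{p_0}$ admissible in the variational characterization \eqref{min-max}. Setting $r(x)=d(p_0,x)$ and noting $\sum_i(u_i^{p_0})^2=f(r)^2$, summing the resulting Rayleigh inequalities gives
\begin{equation*}
\sigma_1(\Omega)\int_{\partial\Omega}f(r)^2\,dA_g \;\le\; \sum_{i=1}^n\int_\Omega |\nabla u_i^{p_0}|^2\,d\mu_g.
\end{equation*}
Applying Rauch's comparison theorem to the Jacobi fields of $\exp_{p_0}$ (under $\Sect_g\le\kappa$) and computing the differential of the map $\Phi\colon x\mapsto(u_1^{p_0},\dots,u_n^{p_0})$ yields the pointwise estimate
\begin{equation*}
\sum_{i=1}^n|\nabla u_i^{p_0}|^2 \;\le\; f'(r)^2+(n-1)\,\frac{f(r)^2}{sn_\kappa(r)^2}=:G_\kappa(r),
\end{equation*}
with equality when $M=M_\kappa$. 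Because $v_i=f(r)\omega_i$ are \emph{exact} eigenfunctions on $\Omega^*\subset M_\kappa$, one has the identity $\int_{\Omega^*}G_\kappa\,d\mu_\kappa=\sigma_1(\Omega^*)\,f(R^*)^2\,|\partial\Omega^*|$.

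\textbf{Spherical symmetrization.} It remains to compare $\int_\Omega G_\kappa(r)\,d\mu_g$ and $\int_{\partial\Omega}f(r)^2\,dA_g$ with their counterparts on $\Omega^*$. In geodesic polar coordinates at $p_0$ the radial Jacobian of the volume form on $M$ is sandwiched between $sn_\kappa(r)^{n-1}$ (from Rauch, using $\Sect_g\le\kappa$) and $sn_K(r)^{n-1}$ (from Bishop--Gromov, using $\Ric_g\ge(n-1)K$), and analogous bounds govern the Jacobian of geodesic sphere area. Since $r\le d=\diam(\Omega)$ on $\Omega$, the ratio of these bounds is at most $\bigl(sn_K(d)/sn_\kappa(d)\bigr)^{n-1}$. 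A spherical rearrangement based at $p_0$ then delivers
\begin{equation*}
\int_\Omega G_\kappa(r)\,d\mu_g \le \Bigl(\tfrac{sn_K(d)}{sn_\kappa(d)}\Bigr)^{n-1}\!\!\int_{\Omega^*}\!\!G_\kappa(r)\,d\mu_\kappa,\quad \int_{\partial\Omega}f(r)^2\,dA_g \ge \Bigl(\tfrac{sn_\kappa(d)}{sn_K(d)}\Bigr)^{n-1}\!\!\int_{\partial\Omega^*}\!\!f(r)^2\,dA_\kappa.
\end{equation*}
Combining these with the eigenfunction identity for $\Omega^*$ produces \eqref{main-inequality}.

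\textbf{Main obstacle.} The most delicate step is the spherical symmetrization: one must check that $G_\kappa(r)$ and $f(r)^2$ possess the monotonicity needed for the rearrangements to go in the correct direction, and that the accumulated Jacobian distortion is sharply captured by the single factor $(sn_K(d)/sn_\kappa(d))^{n-1}$ after exploiting both curvature comparisons and the monotonicity of $r\mapsto sn_K(r)/sn_\kappa(r)$. A secondary difficulty is the Hersch-style centering in the present non-compact, variable-curvature setting, where one relies on the Cartan--Hadamard structure (in particular, the realization of the boundary at infinity of $M$ as $S^{n-1}$) to run a degree argument on the map $p\mapsto (\int_{\partial\Omega}u_i^p\,dA_g)_{i=1}^n$.
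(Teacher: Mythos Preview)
Your outline follows the same broad Brock--Weinstock strategy as the paper (Hersch centering, Rauch for the gradient, comparison to the model ball), but two concrete steps in your symmetrization paragraph do not go through as written, and the paper's argument differs precisely there.

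First, you transplant the radial profile directly as $u_i^p=f(r_p)\,\omega_i^p$. The paper instead uses $F\circ\eta_p\circ r_p$, where $\eta_p$ is the radial reparametrization defined by $|B_{\eta_p(r)}(q)|_\kappa=|B_r(p)|$. The role of $\eta_p$ is that the super-level sets of $\eta_p\circ r_p$ in $(M,g)$ have, by construction, the same volumes as the corresponding geodesic balls in $M_\kappa$; this is exactly what makes the decreasing/increasing rearrangement of a monotone function of $\eta_p\circ r_p$ equal to the same monotone function of $r_q$ on $\Omega^*_q$. Without $\eta_p$, the rearrangement of $G_\kappa(r_p)$ is \emph{not} $G_\kappa(r_q)$, and your claimed inequality $\int_\Omega G_\kappa(r)\,d\mu_g\le C\int_{\Omega^*}G_\kappa\,d\mu_\kappa$ has no justification. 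Relatedly, the constant $(sn_K(d)/sn_\kappa(d))^{2n-2}$ in the paper comes entirely from the numerator, via the pointwise bounds $(\eta_p')^2\le(sn_K(d)/sn_\kappa(d))^{2n-2}$ and $sn_\kappa^2(\eta_p(r))/sn_\kappa^2(r)\le(sn_K(d)/sn_\kappa(d))^{2n-2}$; your proposed splitting of the exponent as $(n-1)+(n-1)$ between numerator and denominator is not how the constant actually arises.

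Second, and more seriously, your lower bound $\int_{\partial\Omega}f(r)^2\,dA_g\ge C^{-1}\int_{\partial\Omega^*}f^2\,dA_\kappa$ is unsupported: there is no spherical rearrangement for surface integrals, and Jacobian bounds for geodesic spheres are irrelevant since $\partial\Omega$ is an arbitrary Lipschitz hypersurface, not a metric sphere. The paper circumvents this entirely by applying the divergence theorem to $F^2(\eta_p(r))\nabla r$, which (after the Laplacian comparison $\Delta r\ge (n-1)sn_\kappa'/sn_\kappa$ and $\eta_p'\ge 1$) converts the boundary term into the volume integral $\int_\Omega G(\eta_p(r))\,d\mu_g$ with $G(s)=(F^2)'(s)+(n-1)\tfrac{sn_\kappa'(s)}{sn_\kappa(s)}F^2(s)$. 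Only then are both sides volume integrals of monotone radial functions ($G$ non-decreasing, your $G_\kappa=H$ non-increasing), and the symmetrization inequalities become legitimate. This divergence-theorem step is the missing idea in your outline.
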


In Euclidean space or hyperbolic space, we have $k=K$ and the constant factor in \eqref{main-inequality} is $1$. So Theorem \ref{th1} recovers the Brock-Weinstock inequality proved by Weinstock \cite{Weinstock54} and \cite{Brock01} for $\mathbb{R}^n$, and by Binoy and Santhanam \cite{BS14} for $\H^n$.
\begin{corollary}\label{cor 1.2}
	In Euclidean space and hyperbolic space, geodesic balls uniquely maximize the first nonzero Steklov eigenvalue among bounded Lipschitz domains with the same volume. 
\end{corollary}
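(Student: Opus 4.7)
The plan is to read the corollary off from Theorem \ref{th1} by specializing the curvature parameters, and then to extract the uniqueness clause from an equality-case analysis.

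First I would observe that in $\mathbb{R}^n$ (with $\kappa=0$) and in $\mathbb{H}^n$ (with $\kappa<0$), the ambient manifold coincides with $M_\kappa$ itself, so $\Sect_g\equiv\kappa$ and $\Ric_g=(n-1)\kappa\,g$. The hypotheses of Theorem \ref{th1} therefore hold with $K=\kappa$, in which case $sn_K(d)=sn_\kappa(d)$ and the factor $\bigl(sn_K(d)/sn_\kappa(d)\bigr)^{2n-2}$ appearing in \eqref{main-inequality} equals $1$. Applying Theorem \ref{th1} to any bounded Lipschitz domain $\Omega$, with $\Omega^*$ the geodesic ball of the same volume, immediately yields the sharp bound $\sigma_1(\Omega)\le\sigma_1(\Omega^*)$.

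For uniqueness, I would trace through the argument of Theorem \ref{th1} under the assumption of equality. Because $M=M_\kappa$, every comparison-geometric inequality used there (Bishop--Gromov volume comparison, radial Hessian estimates, and the like) is already tight, so all of the remaining slack sits in the spherical symmetrization/rearrangement step and in the Hersch-type center-of-mass normalization that generate admissible test functions for the Rayleigh quotient \eqref{min-max}. Equality in symmetrization forces the level sets of the trial function to be concentric geodesic spheres, while equality in the Rayleigh quotient on $\Omega^*$ forces those trial functions to be genuine first Steklov eigenfunctions on the ball. These two rigidity statements together pin $\partial\Omega$ down as a geodesic sphere, whence $\Omega$ is isometric to $\Omega^*$.

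The hard part is the uniqueness half: the inequality is automatic from Theorem \ref{th1}, but rigidity demands a careful audit of where the chain of inequalities in that proof could still tolerate a non-round $\Omega$. A clean alternative is to quote the known rigidity directly, invoking Brock's theorem \cite{Brock01} (with the quantitative refinement \cite{BDPR12}) for the Euclidean case and Binoy--Santhanam \cite{BS14} for the hyperbolic case, and combining them with the inequality supplied by Theorem \ref{th1} to obtain both the extremal bound and the uniqueness in a single stroke.
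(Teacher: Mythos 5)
Your proposal matches the paper's treatment: the paper offers no formal proof of Corollary~\ref{cor 1.2}, and the paragraph preceding it makes exactly your observation---that with $\kappa=K$ the factor $\bigl(sn_K(d)/sn_\kappa(d)\bigr)^{2n-2}$ in \eqref{main-inequality} equals $1$, so Theorem~\ref{th1} yields $\sigma_1(\Omega)\le\sigma_1(\Omega^*)$---and then simply states that this ``recovers'' the Brock--Weinstock inequality, implicitly deferring the uniqueness clause to \cite{Brock01} and \cite{BS14}. Your honest caveat that Theorem~\ref{th1} alone supplies only the inequality (the equality-case rigidity would require auditing the symmetrization steps, or else citing the earlier papers) is an accurate reading of what the paper actually does.
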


We note that Corollary \ref{cor 1.2} has been generalized to the Robin eigenvalues of the Laplacian in non-positively curved space forms by the authors \cite{LWW20-robin}.

On manifolds whose sectional curvatures are bounded from above by $\k$, where $\k\le0$, Binoy and Santhanam obtained a result (cf. \cite[Theorem 1.2]{BS14}) similar to Theorem \ref{th1}. The constant in their inequality depends on the manifold and the space form in comparison, although in a rather non-transparent way. In contrast, the constant in our inequality \ref{main-inequality} reveals the explicit dependency on the geometries.

When $\kappa=0$, it is well-known that
\begin{align*}
\sigma_1(\Omega^*)=\left(\frac{\omega_n}{\operatorname{Vol}(\Omega)}\right)^{1/n},
\end{align*}
where $\omega_n$ is volume of the unit ball in $\mathbb{R}^n$. Then Theorem \ref{th1} gives the following explicit estimate in a Cartan–Hadamard manifold, i.e., a complete simply connected Riemannian manifold with non-positive sectional curvature.
\begin{corollary}
	Let $(M^n, g)$ be a Cartan–Hadamard manifold of dimension $n$, and $\Omega\subset M^n$ be a bounded domain with Lipschitz boundary. If $\Ric_g\geq(n-1)Kg$ for $K\leq 0$, then
	\begin{align}\label{cor 1.3}
	\sigma_1(\Omega)\operatorname{Vol}(\Omega)^{1/n}\leq \omega_n^{1/n}\left(\frac{sn_K(d)}{d}\right)^{2n-2},
	\end{align}
	where $d=\operatorname{diam}(\Omega)$.
\end{corollary}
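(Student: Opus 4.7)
The plan is to deduce this corollary as a direct specialization of Theorem \ref{th1} to the case $\kappa = 0$. A Cartan--Hadamard manifold by definition satisfies $\Sect_g \leq 0$, so the sectional curvature upper bound hypothesis of Theorem \ref{th1} holds with $\kappa = 0$, and the Ricci lower bound $\Ric_g \geq (n-1)Kg$ with $K \leq 0$ is assumed. Hence Theorem \ref{th1} applies with $M_\kappa = \mathbb{R}^n$ and $\Omega^*$ the Euclidean ball of volume $\operatorname{Vol}(\Omega)$.

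The first step is to compute $\sigma_1(\Omega^*)$ explicitly. If $\Omega^*$ is the Euclidean ball of radius $R$, then its Steklov eigenfunctions are the coordinate functions (first spherical harmonics extended harmonically), giving $\sigma_1(\Omega^*) = 1/R$. Since $\operatorname{Vol}(\Omega^*) = \omega_n R^n = \operatorname{Vol}(\Omega)$, we obtain
\begin{align*}
\sigma_1(\Omega^*) = \left(\frac{\omega_n}{\operatorname{Vol}(\Omega)}\right)^{1/n},
\end{align*}
which is the value quoted in the excerpt just before the corollary.

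The second step is to substitute $\kappa = 0$ into \eqref{main-inequality}. Since $sn_0(d) = d$, the prefactor $\bigl(sn_K(d)/sn_\kappa(d)\bigr)^{2n-2}$ reduces to $\bigl(sn_K(d)/d\bigr)^{2n-2}$. Combining with the formula for $\sigma_1(\Omega^*)$ yields
\begin{align*}
\sigma_1(\Omega) \leq \left(\frac{sn_K(d)}{d}\right)^{2n-2} \left(\frac{\omega_n}{\operatorname{Vol}(\Omega)}\right)^{1/n},
\end{align*}
and multiplying both sides by $\operatorname{Vol}(\Omega)^{1/n}$ produces \eqref{cor 1.3}.

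There is no real obstacle here: the corollary is a direct specialization plus the standard explicit value of $\sigma_1$ on a Euclidean ball. The only substantive input is Theorem \ref{th1} itself, which has already been established.
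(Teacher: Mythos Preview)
Your proposal is correct and matches the paper's own reasoning exactly: the paper derives the corollary by specializing Theorem \ref{th1} to $\kappa=0$ (the Cartan--Hadamard hypothesis) and substituting the well-known value $\sigma_1(\Omega^*)=(\omega_n/\operatorname{Vol}(\Omega))^{1/n}$ for Euclidean balls, which is precisely what you do.
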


To conclude this section, we mention several other aspects of the first nonzero Steklov eigenvalue $\sigma_1(\Omega)$. First of all, the question of finding a metric on $\Omega$ maximizing $\sigma_1(\Omega)|\p \Omega|$ has received considerable attention in recent years since the remarkable paper by Fraser and Schoen \cite{FS16}, in which the authors developed the theory of extremal metrics for Steklov eigenvalues via its connection to the free boundary minimal surfaces. Secondly, finding a lower bound for $\sigma_1(\Omega)$ in terms of the geometric data of $\Omega$ is also an interesting question. In this direction, Escobar \cite{Escobar97} proved that for an $n$-dimensional ($n\ge 3$) compact smooth Riemannian manifold with boundary, which has non-negative Ricci curvature and the principal curvatures of the boundary bounded below by $c>0$, the first nonzero Steklov eigenvalue is greater than or equal to $c/2$. Escobar then conjectured in \cite{Escobar99} that the sharp lower bound is $c$ with the equality being true only on isometrically Euclidean balls with radius $1/c$. Recently, Xia and Xiong \cite{XX19} settled Escobar's conjecture under the stronger assumption of non-negative sectional curvature. Lastly, $\sigma_1(\Omega)$ is closely related to the first nonzero Laplace eigenvalue of $\p \Omega$. We refer the reader to the papers by Wang and Xia \cite{WX09}, Karpukhin \cite{Karpukhin17}, Xiong \cite{Xiong18}, Xia and Xiong \cite{XX19} for recent developments. 

This paper is organized as follows. In Section \ref{prelim}, we set up the notation and recall some facts on the eigenfunctions for the first nonzero Steklov eigenvalue on space forms. Section \ref{lemmas} contains results on spherical symmetrizations and the comparison of isoperimetric profiles. We prove Theorem \ref{th1} in Section \ref{proofmain}.

\subsection*{Acknowledgements}
We thank Professors Richard Schoen, Lei Ni and Zhou Zhang for their encouragement and support. 
K. Wang is partially supported by NSFC No.11601359; H. Wu is supported by ARC Grant DE180101348. Both K. Wang and H. Wu acknowledge the excellent work environment provided by the Sydney Mathematical Research Institute.

\section{Preliminaries}\label{prelim}
For any bounded Lipschitz domain $\Omega\subset M:=M^n$, we denote by $|\Omega|$ and $|\partial \Omega|$ the $n$-dimensional volume of $\Omega$ and
the $(n-1)$-dimensional Hausdorff measure of $\partial \Omega$ respectively, each taken with respect to the Riemannian metric $g$ on $M$. Let $(M_\kappa,g_\kappa)$ denote the $n$-dimensional complete simply connected space form of constant sectional curvature $\kappa$, and $\Omega^*_q$ be a geodesic ball in $M_\kappa$ centered at $q$ and satisfying $|\Omega^\ast_q|_\kappa=|\Omega|$, where $|\Omega^*_q|_\kappa$ is the $n$-dimensional volume of $\Omega^*$ with respect to $g_\kappa$.

\subsection{Steklov eigenfunctions on space forms} 

In this subsection, we collect some known facts on the Steklov eigenfunctions corresponding to $\sigma_1(\Omega_q^*)$. Let $R_0$ be the radius of the geodesic ball $\Omega_q^*$ in $M_\kappa$, and $(r, \theta)$ be the polar coordinates centered at $q$. 
Recall that the eigenfunctions on $\Omega_q^*$ corresponding to $\sigma_1(\Omega_q^*)$ are given by
$$u_i(r, \theta)=F(r)\psi_i(\theta),\text{\quad\quad\quad}1\le i\le n,$$
where $\psi_i(\theta)$ are linear coordinate functions restricted to $\mathbb{S}^{n-1}$, and $F(r)$ solves the following ODE initial value problem
\begin{align}\label{EqF}
F''(r)+(n-1)\frac{sn'_\kappa(r)}{sn_\kappa(r)} F'(r) -\frac{n-1}{sn^2_\kappa(r)}F(r)=0,\quad F(0)=0,\quad F'(0)=1.
\end{align}
Then $F>0$ on $(0,\infty)$ by the maximum principle. Also, $F'(r)>0$ on $(0,\infty)$. Indeed, using \eqref{EqF}, we calculate
\begin{align*}
\left(sn_{\kappa}^{n-1}F'\right)' &= sn_\kappa^{n-1} F'' +(n-1)sn_\kappa^{n-2} sn_\kappa' F' \\
&= sn_\kappa^{n-1}(n-1) \left(-\frac{sn'_\kappa F'}{sn_\kappa} +\frac{F}{sn_\kappa^2} + \frac{sn'_\kappa F'}{sn_\kappa}\right)\\
&= (n-1)sn_\kappa^{n-3} F\\
&> 0.
\end{align*}
So then $sn_\kappa^{n-1}(r)F'(r) > sn_\kappa^{n-1}(0)F'(0)=0$, implying that $F'(r)>0$ on $(0,\infty)$. It is straightforward to check that $\sigma_1(\Omega_q^*)=F'(R_0)/F(R_0)$ is the minimal value of the quotient
\begin{align*}
Q(\vp)=\frac{\int_0^{R_0}\left((\vp'(r))^2+\frac{n-1}{sn^{n-1}_\kappa(r)}\vp^2(r)\right)sn_\kappa^{n-1}(r)\,dr}{\vp^2(R_0)sn_\kappa^{n-1}(R_0)}\quad \text{with } \vp(0)=0.
\end{align*}

By calculating the first derivatives and using the differential equation (\ref{EqF}), we have the following monotonicity results.
\begin{proposition}\label{ProM}
	Let $F(r)$ be the function defined in equation \eqref{EqF}. Define 
	\begin{align}
	G(r)&:= (F^2(r))'+\frac{(n-1)sn'_\kappa(r)}{sn_\kappa(r)}F^2(r), \label{def-g}\\
	H(r)&:= (F'(r))^2+\frac{n-1}{sn_\kappa^2(r)}F^2(r). \label{def-h}
	\end{align}
	Then $G$ is non-negative and non-decreasing on $[0,\infty)$ for all $\kappa \in \R$, and $H$ is non-negative and non-increasing on $[0,\infty)$ provided that $\kappa \leq 0$. 
\end{proposition}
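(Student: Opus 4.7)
The strategy is to differentiate $G$ and $H$ directly, substitute $F''$ from the ODE \eqref{EqF} to eliminate the second derivative, and simplify using the standard space-form identities $sn_\kappa'' = -\kappa\, sn_\kappa$ and $(sn_\kappa')^2 + \kappa\, sn_\kappa^2 \equiv 1$. A useful consequence of these, obtained by direct differentiation, is
$$\left(\frac{sn_\kappa'}{sn_\kappa}\right)' = -\frac{1}{sn_\kappa^2},$$
which will drive the cancellations in the computation of $G'$.

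For $G$: differentiating \eqref{def-g} and substituting $F''$, the cross terms proportional to $(sn_\kappa'/sn_\kappa)FF'$ cancel, and all explicit $\kappa$-dependence vanishes, yielding
$$G'(r) \;=\; 2(F'(r))^2 + \frac{n-1}{sn_\kappa^2(r)}F^2(r) \;\geq\; 0$$
for every $\kappa\in\mathbb{R}$. To get non-negativity of $G$ itself it then suffices to check $G(0)=0$: using $F(r)=r+O(r^3)$ near $0$, both $(F^2)'(0)$ and the limit of $(sn_\kappa'/sn_\kappa)F^2$ as $r\to 0^+$ vanish. This handles the first assertion for all $\kappa$.

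For $H$: the analogous differentiation of \eqref{def-h}, followed by substituting $F''$, gives
$$H'(r) \;=\; -\frac{2(n-1)\,sn_\kappa'(r)}{sn_\kappa(r)}\Bigl[(F')^2 + \frac{F^2}{sn_\kappa^2}\Bigr] + \frac{4(n-1)}{sn_\kappa^2(r)}F(r)F'(r).$$
The sign of $\kappa$ is now decisive. When $\kappa\leq 0$ one has $sn_\kappa'(r) = \cosh(\sqrt{-\kappa}\, r)\geq 1$ on $[0,\infty)$, so the first term (whose bracket is non-negative) can be bounded from above by replacing $sn_\kappa'$ with $1$. The resulting expression is exactly a completed square:
$$H'(r) \;\leq\; -\frac{2(n-1)}{sn_\kappa(r)}\Bigl(F'(r) - \frac{F(r)}{sn_\kappa(r)}\Bigr)^2 \;\leq\; 0,$$
proving $H$ is non-increasing. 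Non-negativity of $H$ is immediate from its definition as a sum of squares.

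The main obstacle---really the only nontrivial step---is the sign management in the $H'$ formula: the computation gives terms whose signs depend on $sn_\kappa'$, and the elementary inequality $sn_\kappa'\geq 1$ for $\kappa\leq 0$ is exactly what makes the completed-square bound work. For $\kappa>0$ the same route fails because $sn_\kappa'$ decreases below $1$ (and eventually becomes negative), which explains why the monotonicity assertion for $H$ is restricted to $\kappa\leq 0$, whereas the argument for $G$ is curvature-free.
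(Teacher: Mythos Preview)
Your proposal is correct and follows essentially the same route as the paper: differentiate $G$ and $H$, substitute $F''$ from the ODE, use the identity $(sn_\kappa')^2+\kappa\,sn_\kappa^2=1$ (equivalently $sn_\kappa sn_\kappa''-(sn_\kappa')^2=-1$), and for $H'$ invoke $sn_\kappa'\geq 1$ when $\kappa\leq 0$ to complete the square. The only minor difference is in how you obtain $G\geq 0$: the paper reads it off directly from $F\geq 0$, $F'\geq 0$ and $sn_\kappa'\geq 0$, whereas you deduce it from $G(0)=0$ together with $G'\geq 0$; your route has the slight advantage of not needing $sn_\kappa'\geq 0$ and so is cleaner for the ``all $\kappa\in\mathbb{R}$'' claim.
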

\begin{proof}
	The functions $G$ and $H$ are non-negative on $[0,\infty)$ since $F$ is non-negative and increasing on $[0, \infty)$.
	
	Using equation \eqref{EqF}, we calculate on $(0,\infty)$ that
	\begin{align*}
	G'(r)&= 2FF'' +2(F')^2+(n-1)\left(2\frac{sn_\kappa'}{sn_\kappa}FF'+\frac{sn_\kappa''}{sn_\kappa} F^2 -\frac{(sn_\kappa')^2}{sn_\kappa^2}F^2  \right)\\
	&= 2F \left(-(n-1)\frac{sn'_\kappa}{sn_\kappa}F' +\frac{n-1}{sn_\kappa^2} F \right) +2(F')^2 \\ 
	&\quad +(n-1)\left(2\frac{sn_\kappa'}{sn_\kappa}FF'+\frac{sn_\kappa''}{sn_\kappa} F^2 -\frac{(sn_\kappa')^2}{sn_\kappa^2}F^2  \right)\\
	&= 2(F')^2+\frac{(n-1)F^2}{s_\kappa^2}\left(2+sn_\kappa sn_\kappa'' -(sn_\kappa')^2 \right)\\
	&= 2(F')^2+\frac{(n-1)F^2}{sn_\kappa^2}\\
	&\ge 0,
	\end{align*}
	where in the last equality we used the identity $sn_\kappa sn_\kappa''-(sn_\kappa')^2 =-1 $ for all $\kappa\in\mathbb{R}$. Thus, $G$ is non-decreasing on $(0,\infty)$. 
	
	Likewise, we have on $(0,\infty)$ that
	\begin{align*}
	H'&=2F'F''-2(n-1)\frac{sn_\kappa'}{sn_\kappa^3}F^2+\frac{2(n-1)}{sn_\kappa^2}FF'\\
	&=2F'\left(-(n-1)\frac{sn_\kappa'}{sn_\kappa}F'+\frac{n-1}{sn_\kappa^2}F\right)-2(n-1)\frac{sn_\kappa'}{sn_\kappa^3}F^2+\frac{2(n-1)}{sn_\kappa^2}FF'\\
	&= -\frac{2(n-1)}{sn_\kappa^3}\left(sn_\kappa'sn_\kappa^2(F')^2-2sn_\kappa FF'+sn_\kappa' F^2\right)\\
	&\le -\frac{2(n-1)}{sn_\kappa^3}\left(sn_\kappa^2(F')^2-2sn_\kappa FF'+ F^2 \right)\\
	&= -\frac{2(n-1)}{sn_\kappa^3}\left(sn_\kappa F' - F \right)^2\\
	&\le 0,
	\end{align*}
	where in the first inequality we used $sn_{\kappa}'(r) \geq 1$ for $\kappa \leq 0$. Thus, $H$ is non-increasing on $(0,\infty)$.
\end{proof}

\section{Spherical symmetrizations and isoperimetric inequality}\label{lemmas}

We recall the definitions of spherical symmetrizations. For any non-negative real-valued function $f$ defined on a bounded domain $\Omega\subset M$, the measure of the super-level sets of $f$ is defined by
\begin{align*}
\mu_f(t):=|\{x\in \Omega: f(x)> t\}|.
\end{align*} 
Let $r_q(x)=\operatorname{dist}_{\kappa}(q,x)$ be the distance function on the space form $M_\kappa$ and $B_q(r)$ be the geodesic ball centered at $q$ with radius $r$ in $M_\kappa$. 

\begin{definition} \label{fu}
	Let $\Omega\subset M$ be a bounded domain and $f$ be a non-negative integrable real-valued function defined on $\Omega$. The spherical decreasing and increasing symmetrizations of $f$, denoted by $f^*(x)$  and $f_*(x)$ respectively, are radial functions defined on $\Omega_q^*$ by
	\begin{align*}
	f^*(x):=\sup\left\{t:\mu_f(t)\ge|B_q(r_q(x))|_{\kappa}\right\}
	\end{align*}
	and
	\begin{align*}
	f_*(x):=\sup\left\{t:\mu_f(t)\ge|\Omega_q^*|_{\kappa}-|B_q(r_q(x))|_{\kappa}\right\},
	\end{align*}
	where $\Omega_q^*$ is the geodesic ball in $M_\kappa$ centered at $q$ satisfying $|\Omega_q^*|_{\kappa} = |\Omega|$.
\end{definition}

The $L^s$-norm ($s\geq 1$) is invariant under spherical symmetrizations.
\begin{proposition}\label{Fu}
	For any $s \ge1$, we have
	\begin{align}
	||f(x)||_{L^s(\Omega)}=||f^*(x)||_{L^s(\Omega^*_q)}=||f_*(x)||_{L^s(\Omega^*_q)}.\label{Ls}
	\end{align}
\end{proposition}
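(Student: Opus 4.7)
The plan is to reduce everything to equimeasurability of $f$, $f^*$, and $f_*$ and then apply the layer-cake (Cavalieri) formula. Since $f \geq 0$, the layer cake representation gives
\begin{align*}
\int_\Omega f^s\,d\mu_g = s\int_0^\infty t^{s-1}\mu_f(t)\,dt,
\end{align*}
and analogous formulas hold for $f^*$ and $f_*$ on $\Omega_q^*$ with respect to $g_\kappa$. Thus \eqref{Ls} will follow once I establish
\begin{align*}
\mu_f(t)=\mu_{f^*}(t)=\mu_{f_*}(t)\quad\text{for all }t\geq 0.
\end{align*}

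First I would verify the monotonicity structure of $f^*$ and $f_*$ in the radial variable $r=r_q(x)$. Since $r\mapsto |B_q(r)|_\kappa$ is continuous and strictly increasing on $[0,R_0]$, where $R_0$ is the radius of $\Omega_q^*$, the function $r\mapsto f^*(x)$ is non-increasing and $r\mapsto f_*(x)$ is non-decreasing; in particular both are radial, as stated in Definition \ref{fu}. Consequently, for any $t\geq 0$ the super-level set $\{f^*>t\}$ is an open ball $B_q(R_t)$, and $\{f_*>t\}$ is the complement of a closed ball inside $\Omega_q^*$.

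Next I would identify the measures of these level sets with $\mu_f(t)$. From the definition, $f^*(x)>t$ holds if and only if there exists $s>t$ with $\mu_f(s)\geq |B_q(r_q(x))|_\kappa$; since $\mu_f$ is non-increasing and right-continuous, this is equivalent to $|B_q(r_q(x))|_\kappa\leq \mu_f(t)$. Hence $|\{f^*>t\}|_\kappa=\mu_f(t)$. An entirely parallel argument, with $|B_q(r_q(x))|_\kappa$ replaced by $|\Omega_q^*|_\kappa-|B_q(r_q(x))|_\kappa$, shows that $|\{f_*>t\}|_\kappa=\mu_f(t)$ as well.

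The main (minor) technical point is the careful handling of the supremum in Definition \ref{fu} when $\mu_f$ has jump discontinuities; in that case one must use right-continuity of $\mu_f$ and the fact that the boundary sphere $\{r_q(x)=R_t\}$ has measure zero to conclude the equalities up to null sets, which is sufficient for the integral identities. Once equimeasurability is in hand, substituting into the layer-cake formula immediately yields \eqref{Ls} and completes the proof.
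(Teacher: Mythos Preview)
Your argument is correct: equimeasurability plus the layer-cake formula is the standard route to \eqref{Ls}, and you have identified the only genuine subtlety (right-continuity of $\mu_f$ and the measure-zero boundary sphere) and disposed of it properly. The paper itself does not give a proof here at all---it simply cites \cite[Proposition~2.2]{Edelen17}---so your sketch in fact supplies more detail than the paper does. One minor imprecision worth tightening: the equivalence ``$f^*(x)>t$ if and only if $|B_q(r_q(x))|_\kappa\le\mu_f(t)$'' should in general read $|B_q(r_q(x))|_\kappa<\mu_f(t)$ (the borderline case can fail when $\mu_f$ drops strictly at $t$), but you already flag this in your final paragraph, and since the discrepancy lives on a single geodesic sphere it does not affect the measure identity $|\{f^*>t\}|_\kappa=\mu_f(t)$.
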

\begin{proof}
	See \cite[Proposition 2.2]{Edelen17}.
\end{proof}

For any $p\in M $, let $\eta_p: [0,\infty)\to[0,\infty)$ be the radial function defined by 
\begin{align}\label{def-eta}
\left\vert B_q(\eta_p(r))\right\vert_{\kappa}=|B_p(r)|. 
\end{align}
Clearly, $\eta_p$ is monotone non-decreasing in $r$. The volume comparison theorem for $\Sect_g\le \kappa$ implies that $\eta_p(r)\ge r$.

We first prove a center of mass result.
\begin{lemma}\label{lmtest}
	Assume that $(M^n,g)$ is complete simply connected with $\Sect_g\leq\kappa$ for $\kappa\leq 0$, and $\Omega\subset M^n$ is any bounded domain. Then there exists a point $p\in \operatorname{hull}(\Omega)$, the closed geodesic convex hull of $\Omega$, such that
	\begin{align*}
	\int_{\partial \Omega} (F\circ \eta_p\circ r_p)(x) \frac{\exp_p^{-1}(x)}{r_p(x)}\, dA_g=0,
	\end{align*}
	where $F$ is defined in equation \eqref{EqF}, $r_p(x)=\dist_g(p,x)$, and $\exp_p^{-1}(x)$ denotes the inverse of the exponential map $\exp_p:T_pM^n \to M^n$.
\end{lemma}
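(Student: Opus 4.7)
\emph{The plan} is to define the continuous $T_pM$-valued map
\[
V(p) := \int_{\partial\Omega}(F\circ\eta_p\circ r_p)(x)\,\frac{\exp_p^{-1}(x)}{r_p(x)}\,dA_g
\]
on $K := \operatorname{hull}(\Omega)$, show that $V$ points strictly inward along $\partial K$, and then conclude via a topological degree argument that $V$ must vanish at some point of $K$; such a point is the desired $p$. Because $(M,g)$ is Cartan--Hadamard, every $\exp_p$ is a global diffeomorphism, so $r_p(x)$ and $\exp_p^{-1}(x)$ are jointly continuous, and $\eta_p(r)$ varies continuously in $p$ via the continuous volume function $p\mapsto|B_p(r)|$. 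Together with $F(0)=0$, this makes $V$ continuous on $M$. The set $K$ is compact (contained in any geodesic ball around $\Omega$, which is convex in $M$), and for any $p_0\in\operatorname{int}(K)$ the set $\exp_{p_0}^{-1}(K)$ is compact and star-shaped about $0$ in $T_{p_0}M\cong\mathbb{R}^n$, so $K$ is homeomorphic to the closed ball $D^n$.

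For $p\in\partial K$, the tangent cone $T_pK\subset T_pM$ is a proper closed convex cone, so Hahn--Banach yields a unit $\nu\in T_pM$ with $\langle v,\nu\rangle\leq 0$ for every $v\in T_pK$. Geodesic convexity of $K$ ensures that the minimizing geodesic from $p$ to any $x\in K$ stays in $K$, hence $\exp_p^{-1}(x)\in T_pK$ and $\langle\exp_p^{-1}(x),\nu\rangle\leq 0$ for every $x\in\partial\Omega$. Since $F(\eta_p(r))>0$ for $r>0$, integrating yields $\langle V(p),\nu\rangle\leq 0$. The inequality is strict: equality would force $\partial\Omega$ to lie in the hypersurface $\exp_p(\nu^\perp)$, a diffeomorphic image of a linear hyperplane whose two complementary components in the Cartan--Hadamard manifold are both unbounded, and such a hypersurface cannot bound any nonempty bounded open set.

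Suppose for contradiction $V(p)\neq 0$ throughout $K$. Fix $p_0\in\operatorname{int}(K)$ and trivialize $TM|_K\cong K\times\mathbb{R}^n$ by parallel transport along the radial geodesics from $p_0$; let $\widehat V:K\to S^{n-1}$ denote the normalized map. On $\partial K$ consider the straight-line homotopy $H_t(p):=(1-t)V(p)+t\exp_p^{-1}(p_0)$. A zero of $H_t$ for some $t\in(0,1)$ would make $V(p)$ a positive multiple of $-\exp_p^{-1}(p_0)$; since $\exp_p^{-1}(p_0)\in T_pK\subset\{v\in T_pM:\langle v,\nu\rangle\leq 0\}$, this forces $\langle V(p),\nu\rangle\geq 0$, contradicting the previous paragraph. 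Hence $\widehat V|_{\partial K}$ is homotopic to $p\mapsto\widehat{\exp_p^{-1}(p_0)}$, which under $\exp_{p_0}^{-1}$ is the inward radial projection on the star-shaped compact $\exp_{p_0}^{-1}(K)$ and has degree $\pm 1$. However, $\widehat V$ extends continuously over the contractible ball $K\cong D^n$, forcing its boundary degree to be $0$, a contradiction. Thus $V$ vanishes at some point of $K$. The principal obstacle in the plan is the strict inward-pointing property of the middle paragraph: the Hahn--Banach step is standard, but excluding the degenerate equality case relies on the Cartan--Hadamard observation that $\exp_p$ sends linear hyperplanes to hypersurfaces with unbounded complementary components.
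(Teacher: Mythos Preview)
Your argument is correct and follows essentially the same route as the paper's: define the vector field, observe it points inward along the boundary of the geodesically convex hull (which is a topological ball in a Cartan--Hadamard manifold), and invoke a Brouwer/degree argument to produce a zero. The paper gives only a two-line sketch citing \cite{Edelen17} (flow of $X$ preserves $\operatorname{hull}(\Omega)$, then Brouwer), whereas you supply the continuity, the explicit inward-pointing check with its strictness, and the degree computation; the one place that could use a word of care is the assertion that the tangent cone $T_pK$ is convex---this is true here (normal coordinates make geodesic convexity agree with linear convexity to first order at $p$, or one can bypass it entirely via the CAT(0) obtuse-angle criterion for the nearest-point projection), but it is not a purely formal consequence of Hahn--Banach.
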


\begin{proof}
	The proof is similar to \cite[Lemma 4.1]{Edelen17}. 
	Define the vector field
	$$
	X(p)=\int_{\partial \Omega} (F\circ \eta_p\circ r_p)(x)\frac{\exp_p^{-1}(x)}{r_p(x)}\ dA_g.
	$$
	Then the integral curves of $X$ defines a mapping from $\operatorname{hull}(\Omega)$ to itself. 
	Since $\operatorname{hull}(\Omega)$ is convex and contained in the injectivity radius, $\operatorname{hull}(\Omega)$ is a topological ball and thus $X$ must have a zero by the Brouwer fixed point theorem.
\end{proof}

The spherical symmetrizations of monotone radial functions have the following properties.
\begin{lemma}\label{lmsp}
	Assume $f(r)$ is a non-negative function on $[0,\infty)$.
	\begin{enumerate}
		\item If $f(r)$ is non-decreasing, then for  $y\in\Omega_q^*$
		\begin{align}\label{Equ1}
		\left(f \circ \eta_p \circ r_p\right)_{*}(y)\ge f(r_q(y)).
		\end{align}
		\item If $f(r)$ is non-increasing, then for  $y\in\Omega_q^*$
		\begin{align}\label{Equ2}
		\left(f\circ \eta_p\circ r_p \right)^{*}(y)\le  f( r_q(y)).
		\end{align}
	\end{enumerate}
\end{lemma}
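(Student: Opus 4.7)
The plan is to analyze the distribution function $\mu_h(t) = |\{x \in \Omega : h(x) > t\}|$ of $h := f \circ \eta_p \circ r_p$ and then read off the two inequalities directly from Definition \ref{fu}. The only geometric input I use is the volume comparison recalled after \eqref{def-eta}, namely $\eta_p(r) \ge r$ (equivalently, $\eta_p^{-1}(\rho) \le \rho$ for every $\rho \ge 0$), which follows from the hypothesis $\Sect_g \le \kappa$.

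For part (1), fix $y \in \Omega_q^*$ and set $\rho = r_q(y)$. For any $t < f(\rho)$, non-decreasingness of both $f$ and $\eta_p$ gives $f(\eta_p(r_p(x))) \ge f(\rho) > t$ whenever $r_p(x) \ge \eta_p^{-1}(\rho)$, hence
\[
\{x \in \Omega : r_p(x) \ge \eta_p^{-1}(\rho)\} \subset \{x \in \Omega : h(x) > t\}.
\]
Taking measures and using $|B_p(\eta_p^{-1}(\rho))| = |B_q(\rho)|_\kappa$ from \eqref{def-eta} together with $|\Omega \cap B_p(\eta_p^{-1}(\rho))| \le |B_p(\eta_p^{-1}(\rho))|$ yields
\[
\mu_h(t) \ge |\Omega| - |B_q(\rho)|_\kappa = |\Omega_q^*|_\kappa - |B_q(\rho)|_\kappa.
\]
Since this holds for every $t < f(\rho)$, Definition \ref{fu} immediately gives $h_*(y) \ge f(\rho) = f(r_q(y))$.

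Part (2) is dual. For $t > f(\rho)$, non-increasingness of $f$ forces $f(s) \le f(\rho) < t$ for every $s \ge \rho$, so $\{h > t\} \subset \{x \in \Omega : r_p(x) < \eta_p^{-1}(\rho)\}$, and taking measures,
\[
\mu_h(t) \le |B_p(\eta_p^{-1}(\rho))| = |B_q(\rho)|_\kappa,
\]
which by Definition \ref{fu} gives $h^*(y) \le f(\rho) = f(r_q(y))$.

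The main subtlety I anticipate is in part (2): if $f$ has a plateau or jump at $\rho$, the bound $\mu_h(t) \le |B_q(\rho)|_\kappa$ could fail to be strict, leaving open the possibility that the supremum defining $h^*(y)$ is attained at some $t_0 > f(\rho)$. This will be dispatched either by a short approximation argument (approximating $f$ from above by strictly decreasing continuous functions, which make the bound strict and pass to the limit in both $\mu_h$ and $h^*$) or by directly exploiting right-continuity of $\mu_h$ within the $\sup$ formulation in Definition \ref{fu}. Apart from this technicality, the proof is a direct unpacking of definitions with the volume inequality $\eta_p(r) \ge r$ as the only nontrivial tool.
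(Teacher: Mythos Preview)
Your argument is correct and is essentially the paper's own proof, just phrased through the distribution function rather than via an explicit formula: the paper writes $(f\circ\eta_p\circ r_p)_*(y)=f(\eta_p(r_1))$ with $r_1$ determined by $|B_p(r_1)\cap\Omega|=|B_q(r_q(y))|_\kappa$ and then uses $|B_p(r_1)\cap\Omega|\le|B_p(r_1)|=|B_q(\eta_p(r_1))|_\kappa$ to get $\eta_p(r_1)\ge r_q(y)$, which is exactly the content of your super/sub-level set estimates. One small correction worth noting: although you announce the volume comparison $\eta_p(r)\ge r$ as your ``only geometric input,'' you never actually use it (and neither does the paper)---the only ingredients are the defining relation $|B_p(\eta_p^{-1}(\rho))|=|B_q(\rho)|_\kappa$ from \eqref{def-eta} and the trivial containment $|\Omega\cap B_p(s)|\le|B_p(s)|$, so the lemma holds without any curvature hypothesis; the plateau/jump issue you flag in part (2) is real but routine, and the paper's proof carries the same implicit caveat.
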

\begin{proof}
	It follows from the definitions of $\eta_p$ and spherical symmetrizations that
	\begin{align}\label{pf1}
	\left(f\circ \eta_p\circ r_p\right)_{*}(y)=f(\eta_p(r_1)),
	\end{align}
	where $r_1$ satisfies
	$|B_q(r_q(y))|_{\kappa}=|B_p(r_1)\bigcap \Omega|\leq |B_q(\eta_p(r_1))|_{\kappa}$. So then
	\begin{align*}
	r_q(x)\le \eta_p(r_1),
	\end{align*}
	which implies (\ref{Equ1}) since $f$ is non-decreasing.
	
	The proof of (\ref{Equ2}) is similar as that of (\ref{Equ1}) and we omit the details.
\end{proof}

We now prove a comparison result for isoperimetric profiles.

\begin{lemma}\label{isop}
	Assume that $(M^n,g)$ is complete simply connected with $\Sect_g\leq\kappa$ for $\kappa\leq 0$. Fix $p \in M^n$ and define an isoperimetric profile $I_M:[0, \infty) \to \R^+$ by
	\begin{align*}
	I_M(t):=\operatorname{Area}\left(\p B_{r(t)}(p)\right),
	\end{align*}
	where $r(t)$ is so defined that $\operatorname{Vol}(B_{r(t)}(p))=t$. Then 
	\begin{equation}\label{iso}
	I_M(t)\ge I_{M_\kappa}(t).
	\end{equation}
\end{lemma}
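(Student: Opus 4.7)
The plan is to combine the G\"unther comparison (which applies under an upper sectional curvature bound) with a first-order ODE argument for the isoperimetric profiles. Since $(M,g)$ is Cartan--Hadamard with $\Sect_g\le\kappa\le 0$, the exponential map at $p$ is a global diffeomorphism, so polar coordinates around $p$ are valid everywhere on $M\setminus\{p\}$. I would write $s(r):=\operatorname{Area}(\partial B_r(p))$, $s_\kappa(r):=\omega_{n-1}sn_\kappa(r)^{n-1}$, and $v(r):=\operatorname{Vol}(B_r(p))=\int_0^r s$, $v_\kappa(r)=\int_0^r s_\kappa$, so that $I_M(t)=s(r(t))$ and $I_{M_\kappa}(t)=s_\kappa(r_\kappa(t))$ with $v(r(t))=t=v_\kappa(r_\kappa(t))$.

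First I would invoke G\"unther's inequality: writing $s(r)=\int_{\mathbb{S}^{n-1}} J(r,\theta)\,d\theta$ for the Jacobian of $\exp_p$ in direction $\theta$, the matrix Riccati comparison for the shape operator of geodesic spheres under $\Sect_g\le\kappa$ gives
\[
\partial_r\log J(r,\theta)\ge (n-1)\frac{sn_\kappa'(r)}{sn_\kappa(r)},
\]
and averaging over $\theta$ yields $(\log s)'(r)\ge(\log s_\kappa)'(r)$, so the ratio $s(r)/s_\kappa(r)$ is non-decreasing in $r$. This ratio tends to $1$ as $r\to 0$, hence $s\ge s_\kappa$; integrating once more, $v\ge v_\kappa$. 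Inverting the strictly increasing volume functions then gives $r(t)\le r_\kappa(t)$ for every $t\ge 0$.

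Next I would differentiate the two profiles. From $v(r(t))=t$ one obtains $r'(t)=1/s(r(t))$, so $I_M'(t)=(\log s)'(r(t))$, which is the mean curvature of $\partial B_{r(t)}(p)$, and similarly $I_{M_\kappa}'(t)=(\log s_\kappa)'(r_\kappa(t))$. Combining this with the elementary observation that for $\kappa\le 0$ the function $r\mapsto(\log s_\kappa)'(r)=(n-1)sn_\kappa'(r)/sn_\kappa(r)$ is non-increasing in $r$ (equal to $(n-1)/r$ when $\kappa=0$ and $(n-1)\sqrt{-\kappa}\coth(\sqrt{-\kappa}\,r)$ when $\kappa<0$), the monotonicities established above chain to give
\[
I_M'(t)\ge(\log s_\kappa)'(r(t))\ge(\log s_\kappa)'(r_\kappa(t))=I_{M_\kappa}'(t).
\]
Since $I_M(0)=I_{M_\kappa}(0)=0$, integrating from $0$ yields $I_M(t)\ge I_{M_\kappa}(t)$, as required. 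The only non-routine ingredient is the G\"unther Jacobian comparison; this is classical and the hypothesis $\kappa\le 0$ puts us in the favorable regime (no conjugate points, so the shape operator of geodesic spheres is defined for all $r>0$), so no serious obstacle is expected.
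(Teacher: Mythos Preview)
Your argument is correct and is essentially the same as the paper's: both compute $I_M'(t)=s'(r(t))/s(r(t))$, use the G\"unther inequality $\partial_r\log J(r,\theta)\ge (n-1)sn_\kappa'(r)/sn_\kappa(r)$ (taken as a $J$-weighted average over $\theta$) to get $I_M'(t)\ge(\log s_\kappa)'(r(t))$, then use $r(t)\le r_\kappa(t)$ together with the monotonicity of $(\log s_\kappa)'$ to conclude $I_M'(t)\ge I_{M_\kappa}'(t)$ and integrate. The only cosmetic difference is that the paper keeps the pointwise Jacobian notation $J(r,\theta)$ throughout, whereas you package things as $s(r)$ and $v(r)$; one minor imprecision is your parenthetical identifying $(\log s)'(r(t))$ with ``the mean curvature'' of $\partial B_{r(t)}(p)$ --- it is the $J$-weighted average of the mean curvature, but this does not affect the argument.
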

\begin{proof}
	Let $r_1$ and $r_2$ satisfy
	\begin{align*}
	t=\int_{\mathbb{S}^{n-1}}\int_0^{r_1}J(r,\theta) \, dr \,d\theta=n\omega_n\int_0^{r_2}J_\kappa(r) \, dr.
	\end{align*}
	Since $\operatorname{Sect}_g\le \kappa$, we have the following comparisons
	\begin{align}\label{com}
	J(r,\theta)\ge J_\kappa(r) \text{\quad and \quad} \frac{J'(r,\theta)}{J(r,\theta)}\ge\frac{J_\kappa'(r)}{J_\kappa(r)}.
	\end{align}
	Then from the definitions of $r_1$ and $r_2$, we have  $r_1\le r_2$. By direct calculation,
	\begin{align*}
	I_M'(t)&=\frac{\int_{\mathbb{S}^{n-1}}J'(r_1,\theta)\, d\theta}{\int_{\mathbb{S}^{n-1}}J(r_1,\theta)\, d\theta}\ge\frac{\int_{\mathbb{S}^{n-1}}\frac{J_\kappa'(r_1)}{J_\kappa(r_1)}J(r_1,\theta)\, d\theta}{\int_{\mathbb{S}^{n-1}}J(r_1,\theta)\, d\theta}=\frac{J_\kappa'(r_1)}{J_\kappa(r_1)}
	\ge\frac{J_\kappa'(r_2)}{J_\kappa(r_2)},
	\end{align*}
	where we used the comparison \eqref{com} in the first inequality and $r_1\le r_2$ in the last inequality. Similar calculation shows that
	$$
	I_{M_\kappa}'(t)=\frac{J_\kappa'(r_2)}{J_\kappa(r_2)}.
	$$
	Therefore, we have $$I_M'(t)-I_{M_\kappa}'(t)\ge 0,$$
	thus implying the lemma.
\end{proof}

The next lemma estimates the derivative of $\eta_p(r)$ in terms of the curvatures and the diameter of $\Omega$.
\begin{lemma}\label{lm2}
	Assume that $(M^n,g)$ is complete simply connected with $\Sect_g\le \kappa$ for $\kappa\leq 0$ and $\Ric_g \ge (n-1)Kg$ for $K\leq 0$. Then for all $r\in(0,d]$, where $d=\diam(\Omega)$, we have
	\begin{align}
	\eta_p'(r) & \ge 1, \label{estimates2}\\
	\max\left\{\eta_p'(r), \frac{sn_\kappa\big(\eta_p(r)\big)}{sn_\kappa(r)}\right\} & \le \left(\frac{sn_K (d)}{sn_\kappa (d)}\right)^{n-1}. \label{estimates}
	\end{align}
\end{lemma}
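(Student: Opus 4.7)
The plan is to treat $\eta_p'$ algebraically from the defining equation $|B_q(\eta_p(r))|_\kappa = |B_p(r)|$ and then apply the two opposing curvature comparisons. Writing the left side as $n\omega_n \int_0^{\eta_p(r)} sn_\kappa^{n-1}(t)\, dt$ and differentiating in $r$ yields
$$
\eta_p'(r) = \frac{|\partial B_p(r)|}{n\omega_n\, sn_\kappa^{n-1}(\eta_p(r))},
$$
so every estimate reduces to a comparison for $|\partial B_p(r)|$ combined with a pointwise bound on $\eta_p(r)$.

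For \eqref{estimates2}, apply Lemma \ref{isop} at $t = |B_p(r)|$: this gives $|\partial B_p(r)| \ge I_{M_\kappa}(t) = n\omega_n\, sn_\kappa^{n-1}(\eta_p(r))$, and the displayed formula for $\eta_p'$ immediately yields $\eta_p'(r) \ge 1$.

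For \eqref{estimates}, the Bishop area comparison under $\Ric_g \ge (n-1) K g$ gives $|\partial B_p(r)| \le n\omega_n\, sn_K^{n-1}(r)$ (valid globally since $M$ has no conjugate points when $\Sect_g \le \kappa \le 0$), and volume comparison under $\Sect_g \le \kappa$ gives $|B_p(r)| \ge |B_q(r)|_\kappa$, hence $\eta_p(r) \ge r$. Substituting into the formula for $\eta_p'$ and using $sn_\kappa(\eta_p(r)) \ge sn_\kappa(r)$ produces
$$
\eta_p'(r) \le \left(\frac{sn_K(r)}{sn_\kappa(r)}\right)^{n-1}.
$$
A short calculation then shows that $r \mapsto sn_K(r)/sn_\kappa(r)$ is non-decreasing on $(0,\infty)$ whenever $K \le \kappa \le 0$ (the ordering is forced by $(n-1)\kappa \ge \Ric_g \ge (n-1) K$): the condition $(\log(sn_K/sn_\kappa))' \ge 0$ is equivalent to the monotonicity of $a \mapsto a \coth(ar)$ in $a \ge 0$, which in turn follows from $\sinh(2x) > 2x$. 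This promotes the pointwise bound on $\eta_p'$ to $\eta_p'(r) \le (sn_K(d)/sn_\kappa(d))^{n-1}$ on $(0, d]$.

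Finally, for the ratio $sn_\kappa(\eta_p(r))/sn_\kappa(r)$, multiply the formula for $\eta_p'$ by $sn_\kappa^{n-1}(\eta_p(r))$ and use $\eta_p'(r) \ge 1$ to obtain
$$
sn_\kappa^{n-1}(\eta_p(r)) \le sn_\kappa^{n-1}(\eta_p(r))\,\eta_p'(r) = \frac{|\partial B_p(r)|}{n\omega_n} \le sn_K^{n-1}(r).
$$
Taking the $(n-1)$-th root, dividing by $sn_\kappa(r)$, and invoking the same monotonicity together with $sn_K(d)/sn_\kappa(d) \ge 1$ yields $sn_\kappa(\eta_p(r))/sn_\kappa(r) \le sn_K(d)/sn_\kappa(d) \le (sn_K(d)/sn_\kappa(d))^{n-1}$. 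The main subtlety is bookkeeping: one must align the opposing curvature bounds so that Bishop provides $sn_K^{n-1}(r)$ in the numerator while $\eta_p(r) \ge r$ converts $sn_\kappa(\eta_p(r))$ to $sn_\kappa(r)$ in the denominator, and the monotonicity of $sn_K/sn_\kappa$ is what ultimately lets one replace the pointwise $r$ by the diameter $d$.
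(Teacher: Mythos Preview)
Your proof is correct and follows essentially the same route as the paper: both derive $\eta_p'(r)=|\partial B_p(r)|/|\partial B_{\eta_p(r)}|_\kappa$, use Lemma~\ref{isop} for \eqref{estimates2}, and combine the Bishop area bound $|\partial B_p(r)|\le n\omega_n\,sn_K^{n-1}(r)$ with $\eta_p(r)\ge r$ and the monotonicity of $sn_K/sn_\kappa$ for \eqref{estimates}. Your packaging of the second bound via $\eta_p'\ge 1$ is a cosmetic rephrasing of the paper's direct appeal to $|\partial B_{\eta_p(r)}|_\kappa\le |\partial B_p(r)|$, and your explicit verification of the monotonicity of $sn_K/sn_\kappa$ (which the paper simply asserts) is a welcome addition.
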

\begin{proof}
	We write $\eta_p$ as $\eta$ for short.
	
	Since $\eta'(r)=\frac{d |B_r|}{dr}\frac{d \eta(r)}{d|B_r|}$,
	we have
	\begin{align*}
	\eta'(r)=\frac{|\partial B_r|}{m_\kappa'\left(m_\kappa^{-1}(|B_r|)\right)},
	\end{align*}
	where $m_\kappa(r)=|B_r|_\kappa$. By the definition \eqref{def-eta} of $\eta(r)$, we see that $\eta(r)=m_\kappa^{-1}(|B_r|)$.
	So then
	\begin{align*}
	m_\kappa'\left(m_\kappa^{-1}(|B_r|)\right)=m_\kappa'(\eta(r))=|\partial B_{\eta(r)}|_{\kappa},
	\end{align*}
	which gives
	\begin{align*}
	\eta'(r)=\frac{|\partial B_{r}|}{|\partial B_{\eta(r)}|_{\kappa}}.
	\end{align*}
	Since $\Sect_g\le \kappa$, then from the isoperimetric inequality \eqref{iso}, we deduce
	\begin{align*}
	|\partial B_{\eta(r)}|_{\kappa}\le |\partial B_{r}|,
	\end{align*}
	thus proving (\ref{estimates2}).
	
	Inequality (\ref{estimates}) has been proven in \cite[page 863]{Edelen17}. We give a different proof here.  Since $\eta(r)\ge r$, we have
	\begin{align}\label{eqeta1}
	\eta'(r)=\frac{|\partial B_{r}|}{|\partial B_{\eta(r)}|_{\kappa}}\le\frac{|\partial B_{r}|}{|\partial B_{r}|_{\kappa}} \le \frac{|\partial B_{r}|_{K}}{|\partial B_{r}|_{\kappa}} \le \left(\frac{sn_K (d)}{sn_\kappa (d)}\right)^{n-1},
	\end{align}
	where we have used the curvature condition  $\operatorname{Ric}_g\ge (n-1)Kg$ and the fact that $\frac{sn_K(r)}{sn_\kappa(r)}$ is non-decreasing in $r$.  
	
	Using the isoperimetric inequality (\ref{iso}), we estimate that
	\begin{align*}
	\frac{sn_\kappa\big(\eta(r)\big)}{sn_\kappa(r)}=\left(\frac{|\partial B_{\eta(r)}|_{\kappa}}{|\partial B_r|_{\kappa}}  \right)^{\frac {1} {n-1}}\le \left(\frac{|\partial B_{r}|}{|\partial B_r|_{\kappa}}  \right)^{\frac {1} {n-1}}.
	\end{align*}
	Since $\Ric_g\ge(n-1)Kg$, we have $|\partial B_1|\le|\partial B_r|_{\kappa}$. Therefore, we get
	\begin{align}\label{eqeta2}
	\frac{sn_\kappa\big(\eta(r)\big)}{sn_\kappa(r)}\le \left(\frac{|\partial B_{r}|_{K}}{|\partial B_r|_{\kappa}}  \right)^{\frac {1} {n-1}}=\frac{sn_K(r)}{sn_\kappa(r)}\le \frac{sn_K(d)}{sn_\kappa(d)}
	\end{align}
	where we have again used that $\frac{sn_K(r)}{sn_\kappa(r)}$ is non-decreasing in $r$. Then (\ref{estimates}) follows from (\ref{eqeta1}) and (\ref{eqeta2}).
	
	Therefore, the lemma is proved.
\end{proof}

\section{Proof of Theorem \ref{th1}}\label{proofmain}
We divide the proof of Theorem \ref{th1} into four propositions, each of which gives a different upper bound for $\sigma_1(\Omega)$ and might be of independent interest. 

From here on, we fix $p\in \operatorname{hull}(\Omega)$ according to Lemma \ref{lmtest} so that
\begin{align}\label{test}
\int_{\partial \Omega}  \frac{\exp_p^{-1}(x)}{r_p(x)}\big(F\circ \eta_p\circ r_p\big)(x)\ dA_g= 0.
\end{align}
We denote by $(r, \theta)$, where $\theta\in\mathbb{S}^{n-1}$, the polar coordinates centered at $p$ and by $J(r, \theta)drd\theta$  the volume element at $(r,\theta)$. Then we have
$$
\frac{\exp_p^{-1}(x)}{r_p(x)}=(\psi_1(\theta),\psi_2(\theta),\cdots, \psi_n(\theta)),
$$
where  $\psi_i(\theta)$'s are the restrictions of the linear coordinate functions on $\mathbb{S}^{n-1}$.
We define
\begin{align*}
v_i := \left(F\circ\eta_p\circ r_p\right) \psi_i(\theta),\quad 1\le i\le n.
\end{align*}
Then \eqref{test} is equivalent to
$$
\int_{\partial \Omega} v_i \ dA_g=0,\quad 1\le i\le n.
$$

Using $v_i$'s as test functions for $\sigma_1(\Omega)$, we obtain the following proposition.

\begin{proposition}\label{prop 4.1}
	Assuming the hypotheses of Theorem \ref{th1}, then
	\begin{align}\label{sig11}
	\sigma_1(\Omega) \le \frac{\int_{\Omega} \left( \left|F'(\eta_p(r_p))\eta_p'(r_p)\right|^2+\frac{n-1}{sn_\kappa^2(r_p)}F^2(\eta_p(r_p)) \right)\, d\mu_g}{\int_{\p \Omega} | F(\eta_p(r_p))|^2\, dA_g}.
	\end{align}
\end{proposition}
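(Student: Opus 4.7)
The plan is to substitute the $n$ functions $v_i=(F\circ\eta_p\circ r_p)\,\psi_i(\theta)$ into the Rayleigh quotient \eqref{min-max} and sum the resulting inequalities over $i$. By the choice of $p\in\operatorname{hull}(\Omega)$ supplied by Lemma \ref{lmtest}, identity \eqref{test} guarantees $\int_{\partial\Omega}v_i\,dA_g=0$ for each $i$, so every $v_i$ is admissible. Applying \eqref{min-max} to each $v_i$ individually and adding gives
\[
\sigma_1(\Omega)\le\frac{\sum_{i=1}^n\int_{\Omega}|\nabla v_i|^2\,d\mu_g}{\sum_{i=1}^n\int_{\partial\Omega}v_i^2\,dA_g},
\]
and the remaining work is to identify the numerator and denominator with the expressions appearing in \eqref{sig11}.

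Since $(M,g)$ is simply connected with $\operatorname{Sect}_g\le 0$, the Cartan-Hadamard theorem provides global polar coordinates $(r,\theta)$ at $p$, and the Gauss lemma yields the orthogonal decomposition $|\nabla v_i|^2=(\partial_r v_i)^2+|\nabla^{S_r}v_i|^2_{g_{S_r}}$, where $S_r=\exp_p(r\,\mathbb{S}^{n-1})$ is the geodesic sphere of radius $r$. Setting $u(r):=F(\eta_p(r))$ so that $\partial_r v_i=u'(r)\psi_i(\theta)$ and $\nabla^{S_r}v_i=u(r)\nabla^{S_r}\psi_i$, the pointwise identity $\sum_i\psi_i^2(\theta)\equiv 1$ on $\mathbb{S}^{n-1}$ collapses the radial contributions to $u'(r)^2=(F'(\eta_p(r))\,\eta_p'(r))^2$ and simultaneously simplifies the boundary integrand to $\sum_i v_i^2=F(\eta_p(r_p))^2$, which matches exactly the denominator of \eqref{sig11}.

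The remaining key step is the angular estimate $\sum_{i=1}^n|\nabla^{S_r}\psi_i|^2_{g_{S_r}}\le (n-1)/sn_\kappa^2(r)$, and this is where the upper sectional curvature bound $\operatorname{Sect}_g\le\kappa$ enters in an essential way. By Rauch's comparison theorem, Jacobi fields along radial geodesics from $p$ grow at least as fast as in the model space $M_\kappa$, so the pullback of $g_{S_r}$ to $\mathbb{S}^{n-1}$ via $\theta\mapsto\exp_p(r\theta)$ dominates $sn_\kappa^2(r)\,g_{\mathbb{S}^{n-1}}$ as a quadratic form. Dualizing this metric inequality gives $|\nabla^{S_r}\psi_i|^2_{g_{S_r}}\le sn_\kappa^{-2}(r)\,|\nabla^{\mathbb{S}^{n-1}}\psi_i|^2$, and combining with the standard identity $\sum_i|\nabla^{\mathbb{S}^{n-1}}\psi_i|^2=n-1$ for restrictions of linear coordinates to the unit sphere yields the claimed bound. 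Assembling these pieces produces \eqref{sig11}. I expect the Rauch/Jacobi field comparison to be the main technical input, while everything else is a routine manipulation of the Rayleigh quotient in polar coordinates.
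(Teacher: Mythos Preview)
Your proposal is correct and follows the same strategy as the paper: use the $v_i$ as admissible test functions in the Rayleigh quotient, average over $i$, and control the angular part of the gradient via Rauch comparison. The only cosmetic difference is that the paper routes the angular estimate through the Jacobian inequality $J(r,\theta)\ge sn_\kappa^{n-1}(r)$, whereas you apply the Rauch metric comparison $g_{S_r}\ge sn_\kappa^2(r)\,g_{\mathbb{S}^{n-1}}$ directly and then dualize; both arguments lead to the same bound $(n-1)/sn_\kappa^2(r)$ on $\sum_i|\nabla^{S_r}\psi_i|^2$.
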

\begin{proof} 
	We write $\eta_p$ and $r_p$ as $\eta$ and $r$ for short.	
	
	We denote by $\nabla^{\mathbb{S}^{n-1}}$ the covariant derivative  with respect to the standard metric on the unit sphere $\mathbb{S}^{n-1}$, and by $\nabla$ the covariant derivative with respect to the metric $g=dr^2+g_{ij}(r,\theta) d\theta^i d\theta^j$ on $M$. Using 
	\begin{align*}
	\sum_{i=1}^n\psi^2_i=1 \text{\quad and \quad} \sum_{i=1}^n |\nabla^{\mathbb{S}^{n-1}} \psi_i|^2=n-1,
	\end{align*}
	we compute that 
	\begin{align}\label{eq 4.3}
	\sum_{i=1}^n \int_{\Omega} \left\vert\nabla  v_i\right\vert^2 \,d\mu_g
	&= \sum_{i=1}^n \int_{\Omega} \left|\nabla  \left(F(\eta(r)) \psi_i\right)\right|^2\,d\mu_g \nonumber \\
	&=\sum_{i=1}^n\int_{\Omega} \left\{ \left|F'(\eta(r))\eta'(r)\right|^2  \psi^2_i+ \frac{F^2(\eta(r))}{J^{\frac{2}{n-1}}(r, \theta)} |\nabla^{\mathbb{S}^{n-1}} \psi_i|^2 \right\} \,d\mu_g\nonumber \\
	&=\int_{\Omega} \left\{\left\vert F'(\eta(r))\eta'(r)\right\vert^2 +F^2(\eta(r)) \frac{n-1}{J^{\frac{2}{n-1}}(r, \theta)}\right\} \, d\mu_g  \nonumber \\
	&\leq \int_{\Omega} \left\{\left|F'(\eta(r))\eta'(r)\right|^2+\frac{n-1}{sn_\kappa^2(r)}F^2(\eta(r)) \right\} \, d\mu_g,
	\end{align}
	where in the last step we used 
	\begin{align*}
	J(r,\theta)\ge sn^{n-1}_\kappa(r),
	\end{align*}
	which follows from the Rauch comparison theorem. We also have
	\begin{align}\label{eq 4.4}
	\sum_{i=1}^n \int_{\p \Omega} v_i^2 \ dA_g =\sum_{i=1}^n \int_{\p \Omega} |F(\eta(r))|^2 \psi_i^2\ dA_g  =\int_{\p \Omega} |F(\eta(r))|^2\ dA_g .
	\end{align}
	So using the averaging of Rayleigh quotients for $v_i$, \eqref{eq 4.3} and \eqref{eq 4.4}, we obtain 
	\begin{align*}
	\sigma_1(\Omega) &\le  \frac{\sum\limits_{i=1}^n \int_{\Omega} |\nabla v_i|^2 \, d\mu_g}{\sum\limits_{i=1}^n  \int_{\p \Omega} v_i^2\, dA_g}
	\le   \frac{\int_{\Omega} \left(\left|F'(\eta(r))\eta'(r)\right|^2+\frac{n-1}{sn_\kappa^2(r)}F^2(\eta(r)) \right)\, d\mu_g}{\int_{\p \Omega} | F(\eta(r))|^2\, dA_g}.
	\end{align*}
	This proves the proposition. 
\end{proof}

\begin{proposition}\label{prop 4.2}
	Assuming the hypotheses of Theorem \ref{th1}, then for functions $G$ and $H$ defined in Proposition \ref{ProM}, there holds
	\begin{align}
	\sigma_1(\Omega)\le \left(\frac{sn_K(d)}{sn_\kappa (d)}\right)^{2n-2}\ \frac{\int_{\Omega} H(\eta_p(r_p)) \, d\mu_g}{\int_\Omega  G(\eta_p(r_p)) \, d\mu_g},\label{7}
	\end{align}
	where $d=\diam(\Omega)$.
\end{proposition}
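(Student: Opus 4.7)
The plan is to start from the test-function bound of Proposition \ref{prop 4.1} and then (a) inflate the numerator by the factor $(sn_K(d)/sn_\kappa(d))^{2n-2}$ in order to replace $r_p$ by $\eta_p(r_p)$ in the $sn_\kappa^{-2}$ term and to absorb $\eta_p'(r_p)$, and (b) convert the boundary integral $\int_{\p\Omega}F^2(\eta_p(r_p))\,dA_g$ into an interior integral of $G(\eta_p(r_p))$ via the divergence theorem.

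For step (a), let $C=\bigl(sn_K(d)/sn_\kappa(d)\bigr)^{n-1}$, so by Lemma \ref{lm2} we have $\eta_p'(r_p)\le C$ and $sn_\kappa(\eta_p(r_p))/sn_\kappa(r_p)\le C$ on $\Omega$. Using $F'\ge 0$ the first bound gives $|F'(\eta_p(r_p))\eta_p'(r_p)|^2\le C^2|F'(\eta_p(r_p))|^2$, while the second bound yields
\begin{align*}
\frac{1}{sn_\kappa^2(r_p)}
=\frac{1}{sn_\kappa^2(\eta_p(r_p))}\cdot\frac{sn_\kappa^2(\eta_p(r_p))}{sn_\kappa^2(r_p)}
\le\frac{C^2}{sn_\kappa^2(\eta_p(r_p))}.
\end{align*}
Summed, the numerator of \eqref{sig11} is bounded above by $C^2\int_\Omega H(\eta_p(r_p))\,d\mu_g$.

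For step (b), I would apply the divergence theorem to the radial vector field $X:=F^2(\eta_p(r_p))\,\nabla r_p$. A direct computation gives
\begin{align*}
\operatorname{div}(X)=\bigl(F^2\bigr)'(\eta_p(r_p))\,\eta_p'(r_p)+F^2(\eta_p(r_p))\,\Delta r_p.
\end{align*}
Now I would combine three ingredients: (i) Lemma \ref{lm2} gives $\eta_p'(r_p)\ge 1$, and since $(F^2)'\ge0$, the first term is at least $(F^2)'(\eta_p(r_p))$; (ii) Since $M$ is simply connected with $\Sect_g\le\kappa\le 0$, the Hessian comparison theorem yields $\Delta r_p\ge(n-1)\,sn_\kappa'(r_p)/sn_\kappa(r_p)$ on $M\setminus\{p\}$; (iii) the identity $(sn_\kappa'/sn_\kappa)'=-1/sn_\kappa^2\le 0$ and $\eta_p(r_p)\ge r_p$ give $sn_\kappa'(r_p)/sn_\kappa(r_p)\ge sn_\kappa'(\eta_p(r_p))/sn_\kappa(\eta_p(r_p))$. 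Combining (i)--(iii) produces $\operatorname{div}(X)\ge G(\eta_p(r_p))$ pointwise on $\Omega\setminus\{p\}$. Since $|\nabla r_p|=1$ and $|\nu|=1$, integration gives
\begin{align*}
\int_\Omega G(\eta_p(r_p))\,d\mu_g\le\int_{\p\Omega}F^2(\eta_p(r_p))\,\langle\nabla r_p,\nu\rangle\,dA_g\le\int_{\p\Omega}F^2(\eta_p(r_p))\,dA_g.
\end{align*}

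Plugging the two bounds back into \eqref{sig11} yields \eqref{7}. The one technical wrinkle is that $r_p$ fails to be smooth at $p$ when $p\in\Omega$, so the divergence theorem must be applied on $\Omega\setminus B_\varepsilon(p)$ and then the limit $\varepsilon\to 0$ taken; this is harmless because the initial condition $F(0)=0$ in \eqref{EqF} forces $F^2(\eta_p(\varepsilon))\to 0$, while $|\p B_\varepsilon(p)|=O(\varepsilon^{n-1})$, so the correction term on $\p B_\varepsilon(p)$ vanishes in the limit. I expect the main conceptual hurdle to be arranging the comparisons in step (b) so that each inequality points the right way simultaneously — in particular, it is essential that $(F^2)'\ge 0$, $\eta_p'\ge 1$, and $sn_\kappa'/sn_\kappa$ is decreasing work in concert, and these three facts conspire to replace $r_p$ by $\eta_p(r_p)$ in every term of $\operatorname{div}(X)$ at no cost.
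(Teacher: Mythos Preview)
Your proposal is correct and follows essentially the same approach as the paper's proof: both bound the numerator of \eqref{sig11} by factoring out $\max\{(\eta_p')^2, sn_\kappa^2(\eta_p(r_p))/sn_\kappa^2(r_p)\}$ via Lemma~\ref{lm2}, and both convert the boundary integral to $\int_\Omega G(\eta_p(r_p))\,d\mu_g$ by applying the divergence theorem to $F^2(\eta_p(r_p))\nabla r_p$ together with the Laplacian comparison, $\eta_p'\ge 1$, and the monotonicity of $sn_\kappa'/sn_\kappa$. Your treatment of the singularity of $r_p$ at $p$ via excising $B_\varepsilon(p)$ is a welcome technical refinement that the paper leaves implicit.
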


\begin{proof}
	We write $\eta_p$ and $r_p$ as $\eta$ and $r$ for short.	
	
	It follows from the definition \eqref{def-h} of $H$ and the estimate \eqref{estimates} in Lemma \ref{lm2} that
	\begin{align}\label{sig1}
	&\quad\, \left|F'(\eta(r))\right|^2(\eta'(r))^2+\frac{n-1}{sn_\kappa^2 (r)}F^2(\eta(r))\nonumber \\
	&\leq  \max\left\{(\eta'(r))^2, \frac{sn^2_\kappa\big(\eta(r)\big)}{sn^2_\kappa(r)}\right\} \left( \left|F'(\eta(r))\right|^2  +\frac{n-1}{sn_\kappa^2 (\eta(r))}F^2(\eta(r)) \right) \nonumber\\
	&\le  \left(\frac{sn_K(d)}{sn_\kappa (d)}\right)^{2n-2}
	H(\eta(r)).
	\end{align}
	
	We estimate the boundary integral.
	\begin{align}\label{sig2}
	& \quad\, \int_{\partial \Omega} F^2(\eta(r)) \, dA_g \nonumber\\
	&\ge \int_{\partial \Omega} F^2(\eta(r))\langle \nabla r, \nu \rangle \, dA_g\nonumber\\
	&=\int_\Omega \operatorname{div}\left(F^2(\eta(r))\nabla r\right) \, d\mu_g\nonumber\\
	&=\int_\Omega \left\{ (F^2)'(\eta(r))\eta'(r)+F^2(\eta(r))\Delta r \right\} d\mu_g\nonumber\\
	&\ge \int_\Omega \left\{ (F^2)'(\eta(r))\eta'(r)+\frac{(n-1)sn_\kappa'(r)}{sn_\kappa(r)}F^2(\eta(r))\right\}\, d\mu_g\nonumber\\
	&=\int_\Omega \left\{ (F^2)'(\eta(r))\eta'(r)+\frac{(n-1)sn_\kappa'(\eta(r))}{sn_\kappa(\eta(r))}F^2(\eta(r))\frac{sn_\kappa(\eta(r))}{sn'_\kappa(\eta(r))}\frac{sn_\kappa'(r)}{sn_\kappa(r)} \right\}\, d\mu_g\nonumber\\
	&\ge\int_\Omega \left\{ (F^2)'(\eta(r))+\frac{(n-1)sn_\kappa'(\eta(r))}{sn_\kappa(\eta(r))}F^2(\eta(r))\right\}\, d\mu_g\nonumber\\
	&=\int_\Omega G(\eta(r))\, d\mu_g,
	\end{align}
	where the last equality follows from the definition \eqref{def-g} of $G$, in the first inequality we used $|\nabla r| =1$, in the second inequality we used the Laplacian comparison theorem for the distance function, and in the last inequality we used $\eta'(r)\ge 1$ from Lemma \ref{lm2} and 
	\begin{equation*}
	\frac{sn_\kappa(\eta(r))}{sn'_\kappa(\eta(r))}\frac{sn_\kappa'(r)}{sn_\kappa(r)} \geq 1, 
	\end{equation*}
	which follows from $\eta(r)\ge r$ and that $\frac{sn_\kappa'(r)}{sn_\kappa(r)}$ is monotonically decreasing in $r$.  
	
	The proposition follows by substituting \eqref{sig1} and \eqref{sig2} into \eqref{sig11}.  
\end{proof}

Let $d\mu$ denote the volume form with respect to $g_\k$ on the space form $M_\kappa$.

\begin{proposition}\label{propsigma1}
	Assuming the hypotheses of Theorem \ref{th1}, then for functions $g$ and $h$ defined in Proposition \ref{ProM}, there holds
	\begin{align}\label{prop4.3}
	\sigma_1(\Omega)\le \left(\frac{sn_K(d)}{sn_\kappa (d)}\right)^{2n-2} \frac{\int_{\Omega_q^*} H(r_q)\, d \mu}{\int_{\Omega_q^*} G(r_q)\, d\mu}.
	\end{align}
\end{proposition}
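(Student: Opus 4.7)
The plan is to pass from the inequality \eqref{7} in Proposition \ref{prop 4.2}, which involves integrals over $\Omega \subset M$, to the symmetrized inequality \eqref{prop4.3}, which involves integrals of purely radial quantities over the geodesic ball $\Omega_q^* \subset M_\kappa$. Since the constant factor $\left(sn_K(d)/sn_\kappa(d)\right)^{2n-2}$ is already in place, what remains is to prove the purely analytic comparison
\begin{equation*}
\frac{\int_{\Omega} H(\eta_p(r_p))\, d\mu_g}{\int_{\Omega} G(\eta_p(r_p))\, d\mu_g} \;\le\; \frac{\int_{\Omega_q^*} H(r_q)\, d\mu}{\int_{\Omega_q^*} G(r_q)\, d\mu}.
\end{equation*}

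The key observation is that Proposition \ref{ProM} provides the needed monotonicities: the function $G$ is non-decreasing, while the function $H$ is non-increasing (this is where the hypothesis $\kappa \le 0$ enters). Both are also non-negative. This makes them ideally suited to the spherical rearrangement results of Section \ref{lemmas}. Specifically, I would apply Lemma \ref{lmsp}: since $H$ is non-increasing, part (2) gives $(H\circ \eta_p \circ r_p)^*(y) \le H(r_q(y))$ pointwise on $\Omega_q^*$, while since $G$ is non-decreasing, part (1) gives $(G\circ \eta_p \circ r_p)_*(y) \ge G(r_q(y))$ pointwise on $\Omega_q^*$.

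Integrating these pointwise inequalities over $\Omega_q^*$ and invoking Proposition \ref{Fu} (with $s=1$), which guarantees that spherical symmetrization preserves the $L^1$ norm, we obtain
\begin{align*}
\int_\Omega H(\eta_p(r_p))\, d\mu_g &= \int_{\Omega_q^*}(H\circ \eta_p \circ r_p)^*\, d\mu \;\le\; \int_{\Omega_q^*} H(r_q)\, d\mu,\\
\int_\Omega G(\eta_p(r_p))\, d\mu_g &= \int_{\Omega_q^*}(G\circ \eta_p \circ r_p)_*\, d\mu \;\ge\; \int_{\Omega_q^*} G(r_q)\, d\mu.
\end{align*}
Dividing the first inequality by the second (both denominators are positive because $G \ge 0$ is not identically zero) yields the desired ratio comparison, and combining with \eqref{7} completes the proof.

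I do not expect any serious obstacle: the heavy lifting has already been done in the preliminary sections, specifically the monotonicity of $G$ and $H$ (Proposition \ref{ProM}), the symmetrization comparison for monotone radial functions (Lemma \ref{lmsp}), and the equimeasurability of spherical rearrangements (Proposition \ref{Fu}). The only small care needed is to use the \emph{decreasing} rearrangement $f^*$ against the non-increasing $H$ and the \emph{increasing} rearrangement $f_*$ against the non-decreasing $G$, so that both pointwise inequalities point in the direction that makes the ratio increase upon symmetrization.
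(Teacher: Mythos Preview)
Your proposal is correct and follows essentially the same approach as the paper: invoke the monotonicity of $G$ and $H$ from Proposition~\ref{ProM}, apply Lemma~\ref{lmsp} (part (1) with $f=G$, part (2) with $f=H$), use Proposition~\ref{Fu} with $s=1$ to convert the $\Omega$-integrals into $\Omega_q^*$-integrals of the rearrangements, and then combine with \eqref{7}. There is nothing to add.
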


\begin{proof}
	Lemma \ref{lmsp} applies to functions $g$ and $h$ defined in Proposition \ref{ProM}. Setting $f=G$ in inequality \eqref{Equ1} and $f=H$ in inequality \eqref{Equ2}, and using Proposition \ref{Fu}, we obtain
	\begin{align}\label{10}
	\int_\Omega H\circ \eta_p \circ  r_p \, d\mu_g=\int_{\Omega_q^*} \left(H\circ \eta_p \circ  r_p\right)^{*} \, d\mu\le \int_{\Omega_q^*} H(r_q)\, d\mu
	\end{align}
	and
	\begin{align}\label{11}
	\int_\Omega G\circ \eta_p \circ  r_p \, d\mu_g = \int_{\Omega_q^*} \left(G\circ \eta_p \circ  r_p\right)_{*}\, d\mu \ge \int_{\Omega_q^*} G(r_q)\, d\mu.
	\end{align}
	Assembling (\ref{7}), (\ref{10}) and (\ref{11}) together, we conclude the proposition. 
\end{proof}

\begin{proposition}\label{propsigma2}
	Assuming the hypotheses of Theorem \ref{th1}, then
	\begin{eqnarray}\label{prop4.4}
	\sigma_1(\Omega_q^*) =\frac{\int_{\Omega_q^*} H(r_q)\, d\mu}{\int_{\Omega_q^*} G(r_q)\, d\mu}.
	\end{eqnarray}
\end{proposition}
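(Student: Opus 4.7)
The plan is to reduce both integrals in \eqref{prop4.4} to one-dimensional integrals by using geodesic polar coordinates $(r,\theta)$ on $M_\kappa$ centered at $q$, in which $d\mu = sn_\kappa^{n-1}(r)\, dr\, d\theta$ and the integrands $H(r_q)$, $G(r_q)$ are purely radial. After integrating out the angular $\mathbb{S}^{n-1}$ factor, which appears in both numerator and denominator and therefore cancels, the identity reduces to
\begin{equation*}
\frac{F'(R_0)}{F(R_0)} \;=\; \frac{\int_0^{R_0} H(r)\, sn_\kappa^{n-1}(r)\, dr}{\int_0^{R_0} G(r)\, sn_\kappa^{n-1}(r)\, dr},
\end{equation*}
where $R_0$ is the radius of $\Omega_q^*$ and I am using the fact, recalled in Section \ref{prelim}, that $\sigma_1(\Omega_q^*) = F'(R_0)/F(R_0)$.

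The denominator integral collapses immediately. Writing out the definition \eqref{def-g} of $G$, one sees that the integrand is an exact derivative:
\begin{equation*}
G(r)\, sn_\kappa^{n-1}(r) \;=\; (F^2)'(r)\, sn_\kappa^{n-1}(r) + (n-1)\, sn_\kappa^{n-2}(r)\, sn_\kappa'(r)\, F^2(r) \;=\; \bigl(F^2(r)\, sn_\kappa^{n-1}(r)\bigr)'.
\end{equation*}
Integrating from $0$ to $R_0$ and using $F(0)=0$ gives $\int_0^{R_0} G\, sn_\kappa^{n-1}\, dr = F^2(R_0)\, sn_\kappa^{n-1}(R_0)$.

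For the numerator, the cleanest route is to recognize that the quotient
\begin{equation*}
\frac{\int_0^{R_0} H(r)\, sn_\kappa^{n-1}(r)\, dr}{F^2(R_0)\, sn_\kappa^{n-1}(R_0)}
\end{equation*}
is exactly the Rayleigh quotient $Q(F)$ recorded in Section \ref{prelim}, so it equals $\sigma_1(\Omega_q^*)=F'(R_0)/F(R_0)$ since $F$ realizes the minimum. Alternatively, one can derive this directly by integration by parts: in $\int_0^{R_0}(F')^2 sn_\kappa^{n-1}\, dr$, move one $F'$ onto $F'sn_\kappa^{n-1}$, use the ODE \eqref{EqF} in the form $(F' sn_\kappa^{n-1})' = (n-1)F\, sn_\kappa^{n-3}$, and observe that the resulting bulk term cancels precisely the $\tfrac{n-1}{sn_\kappa^2}F^2 \cdot sn_\kappa^{n-1}$ piece of $H$, leaving only the boundary term $F'(R_0) F(R_0)\, sn_\kappa^{n-1}(R_0)$. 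Dividing by the denominator yields $F'(R_0)/F(R_0) = \sigma_1(\Omega_q^*)$.

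There is no real obstacle here; the only thing to watch is the cancellation in the integration-by-parts step, which works exactly because $F$ solves \eqref{EqF} and $F(0)=0$ eliminates the boundary term at $r=0$. The proposition therefore amounts to unpacking the definitions of $G$ and $H$, using the radial symmetry of $\Omega_q^*$ to reduce to one variable, and invoking the explicit form of the Steklov eigenfunction on a geodesic ball.
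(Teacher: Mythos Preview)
Your proof is correct and follows essentially the same approach as the paper. The only cosmetic difference is that the paper computes the denominator via the divergence theorem applied to $F^2(r_q)\nabla r_q$ on $\Omega_q^*$, while you pass to polar coordinates first and observe directly that $G(r)\,sn_\kappa^{n-1}(r)=\bigl(F^2(r)\,sn_\kappa^{n-1}(r)\bigr)'$; these are the same computation, and your treatment of the numerator (Rayleigh quotient or, equivalently, integration by parts using \eqref{EqF}) matches the paper's use of the eigenfunctions $F(r)\psi_i(\theta)$.
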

\begin{proof}
	Recall that $F(r)\psi_i(\theta)$, $1\le i\le n$, are the eigenfunctions for $\sigma_1(\Omega_q^*)$. It then follows that
	\begin{align*}
	\sigma_1(\Omega_q^*)=\frac{\int_{\Omega_q^*}\left( |F'|^2(r_q)+\frac{m-1}{sn_\kappa^2 (r_q)}F^2(r_q)\right)\, d\mu}{\int_{\partial \Omega_q^*} F^2(r_q)\, dA}=\frac{\int_{\Omega_q^*} H(r_q)\, d\mu}{\int_{\partial \Omega_q^*} F^2(r_q)\, dA},
	\end{align*}
	where $dA$  is the induced measure on $\p \Omega_q^*$. Also recalling the definition \eqref{def-g} of $G$ in Proposition \ref{ProM}, then we have
	\begin{align*}
	\int_{\partial \Omega_q^*} F^2(r_q)\, dA &= \int_{\partial \Omega_q^*} \langle F^2(r_q)\nabla r_q, \nu\rangle \, dA\\
	&=\int_{\Omega_q^*} \operatorname{div}\left( F^2(r_q)\nabla r_q \right) \, d\mu\\
	&=\int_{\Omega_q^*} \left( (F^2)'+F^2\Delta r_q \right) \, d\mu\\
	&=\int_{\Omega_q^*} \left( (F^2)'+\frac{(n-1)sn'_\kappa}{sn_\kappa}F^2 \right)\, d\mu\\
	&=\int_{\Omega_q^*} G(r_q)\, d\mu.
	\end{align*}
	Therefore, we have proved the proposition.
\end{proof}

\begin{proof}[Proof of Theorem \ref{th1}]
	Theorem \ref{th1} follows immediately from \eqref{prop4.3} and \eqref{prop4.4}. 
\end{proof}



\begin{thebibliography}{BDPR12}
	
	\bibitem[AB95]{AB95}
	Mark~S. Ashbaugh and Rafael~D. Benguria.
	\newblock Sharp upper bound to the first nonzero {N}eumann eigenvalue for
	bounded domains in spaces of constant curvature.
	\newblock {\em J. London Math. Soc. (2)}, 52(2):402--416, 1995.
	
	\bibitem[BDPR12]{BDPR12}
	Lorenzo Brasco, Guido De~Philippis, and Berardo Ruffini.
	\newblock Spectral optimization for the {S}tekloff-{L}aplacian: the stability
	issue.
	\newblock {\em J. Funct. Anal.}, 262(11):4675--4710, 2012.
	
	\bibitem[BFCT17]{BFNT17}
	Dobin Bucur, Vincenzo Ferone, Nitsch Carlo, and Cristina Trombetti.
	\newblock Weinstock inequality in higher dimensions.
	\newblock {\em J. Differential Geom. to appear.arXiv:1710.04587v2}, 2017.
	
	\bibitem[Bro01]{Brock01}
	F.~Brock.
	\newblock An isoperimetric inequality for eigenvalues of the {S}tekloff
	problem.
	\newblock {\em ZAMM Z. Angew. Math. Mech.}, 81(1):69--71, 2001.
	
	\bibitem[BS14]{BS14}
	Binoy and G.~Santhanam.
	\newblock Sharp upperbound and a comparison theorem for the first nonzero
	{S}teklov eigenvalue.
	\newblock {\em J. Ramanujan Math. Soc.}, 29(2):133--154, 2014.
	
	\bibitem[Cha84]{Chavel84}
	Isaac Chavel.
	\newblock {\em Eigenvalues in {R}iemannian geometry}, volume 115 of {\em Pure
		and Applied Mathematics}.
	\newblock Academic Press, Inc., Orlando, FL, 1984.
	\newblock Including a chapter by Burton Randol, With an appendix by Jozef
	Dodziuk.
	
	\bibitem[CR16]{CR16}
	Philippe Castillon and Berardo Ruffini.
	\newblock A spectral characterization of geodesic balls in non-compact rank one
	symmetric spaces.
	\newblock {\em arxiv:1611.06081v4}, 2016.
	
	\bibitem[Ede17]{Edelen17}
	Nick Edelen.
	\newblock The {PPW} conjecture in curved spaces.
	\newblock {\em J. Funct. Anal.}, 272(3):849--865, 2017.
	
	\bibitem[Esc97]{Escobar97}
	Jos\'{e}~F. Escobar.
	\newblock The geometry of the first non-zero {S}tekloff eigenvalue.
	\newblock {\em J. Funct. Anal.}, 150(2):544--556, 1997.
	
	\bibitem[Esc99]{Escobar99}
	Jos\'{e}~F. Escobar.
	\newblock An isoperimetric inequality and the first {S}teklov eigenvalue.
	\newblock {\em J. Funct. Anal.}, 165(1):101--116, 1999.
	
	\bibitem[FS11]{FS11}
	Ailana Fraser and Richard Schoen.
	\newblock The first {S}teklov eigenvalue, conformal geometry, and minimal
	surfaces.
	\newblock {\em Adv. Math.}, 226(5):4011--4030, 2011.
	
	\bibitem[FS16]{FS16}
	Ailana Fraser and Richard Schoen.
	\newblock Sharp eigenvalue bounds and minimal surfaces in the ball.
	\newblock {\em Invent. Math.}, 203(3):823--890, 2016.
	
	\bibitem[FS19]{FS19}
	Ailana Fraser and Richard Schoen.
	\newblock Shape optimization for the {S}teklov problem in higher dimensions.
	\newblock {\em Adv. Math.}, 348:146--162, 2019.
	
	\bibitem[GP17]{GP17}
	Alexandre Girouard and Iosif Polterovich.
	\newblock Spectral geometry of the {S}teklov problem (survey article).
	\newblock {\em J. Spectr. Theory}, 7(2):321--359, 2017.
	
	\bibitem[Kar17]{Karpukhin17}
	Mikhail Karpukhin.
	\newblock Bounds between {L}aplace and {S}teklov eigenvalues on nonnegatively
	curved manifolds.
	\newblock {\em Electron. Res. Announc. Math. Sci.}, 24:100--109, 2017.
	
	\bibitem[LWW20]{LWW20-robin}
	Xiaolong Li, Kui Wang, and Haotian Wu.
	\newblock On the second {R}obin eigenvalue of the laplacian.
	\newblock {\em preprint}, 2020.
	
	\bibitem[Ste02]{Steklov02}
	W.~Stekloff.
	\newblock Sur les probl\`emes fondamentaux de la physique math\'{e}matique
	(suite et fin).
	\newblock {\em Ann. Sci. \'{E}cole Norm. Sup. (3)}, 19:455--490, 1902.
	
	\bibitem[Wei54]{Weinstock54}
	Robert Weinstock.
	\newblock Inequalities for a classical eigenvalue problem.
	\newblock {\em J. Rational Mech. Anal.}, 3:745--753, 1954.
	
	\bibitem[WX09]{WX09}
	Qiaoling Wang and Changyu Xia.
	\newblock Sharp bounds for the first non-zero {S}tekloff eigenvalues.
	\newblock {\em J. Funct. Anal.}, 257(8):2635--2644, 2009.
	
	\bibitem[Xio18]{Xiong18}
	Changwei Xiong.
	\newblock Comparison of {S}teklov eigenvalues on a domain and {L}aplacian
	eigenvalues on its boundary in {R}iemannian manifolds.
	\newblock {\em J. Funct. Anal.}, 275(12):3245--3258, 2018.
	
	\bibitem[XX19]{XX19}
	Chao Xia and Changwei Xiong.
	\newblock Escobar's conjecture on a sharp lower bound for the first nonzero
	{S}teklov eigenvalue.
	\newblock {\em arXiv:1907.07340v2}, 2019.
	
\end{thebibliography}

\end{document}